\documentclass{amsart}
\usepackage{amsmath,amsfonts,amsthm,amssymb,verbatim,enumerate,graphicx}
\usepackage[flushmargin]{footmisc}
\newtheorem{theorem}{Theorem}[section]

\newtheorem{lemma}{Lemma}[section]
\newtheorem{corollary}{Corollary}[section]

\newtheorem{remark}{Remark}
\numberwithin{equation}{section}
\def \isnatural {\in\mathbb{N}}

\def \sai  {annular itinerary}
\def \sais {annular itineraries}

\newcommand{\tef}{transcendental entire function}
\newcommand\qfor{\quad\text{for }}
\newcommand\Real{\operatorname{Re}}
\newcommand\Imag{\operatorname{Im}}
\newcommand\Lunif{L_U}
\newcommand\Lflat{L_A}
\newcommand\Mflat{M_A}
\newcommand\thefun{E_\lambda}
\makeatletter
\def\blfootnote{\xdef\@thefnmark{}\@footnotetext}
\makeatother
\begin{document}
%
%
\title[Slowly escaping sets and annular itineraries]{Dimensions of slowly escaping sets and annular itineraries for exponential functions}
\author{D. J. Sixsmith}
\address{Department of Mathematics and Statistics \\
	 The Open University \\
   Walton Hall\\
   Milton Keynes MK7 6AA\\
   UK}
\email{david.sixsmith@open.ac.uk}
%
%
\begin{abstract}
We study the iteration of functions in the exponential family. We construct a number of sets, consisting of points which escape to infinity `slowly', and which have Hausdorff dimension equal to $1$. We prove these results by using the idea of an \emph{annular itinerary}. In the case of a general {\tef} we show that one of these sets, the \emph{uniformly slowly escaping set}, has strong dynamical properties and we give a necessary and sufficient condition for this set to be non-empty.
\end{abstract}
\maketitle
%
%
%
\blfootnote{2010 \emph{Mathematics Subject Classification.} Primary 37F10; Secondary 30D05.}
\blfootnote{The author was supported by Engineering and Physical Sciences Research Council grant EP/J022160/1.}
\section{Introduction}
This paper is principally concerned with {\tef}s in the \emph{exponential family}, defined by $$\thefun(z) = \lambda e^z, \qfor \lambda \in \mathbb{C}\backslash\{0\}.$$

For a general {\tef} $f$, the \emph{Fatou set} $F(f)$ is defined as the set $z~\in~\mathbb{C}$ such that $\{f^n\}_{n\isnatural}$ is a normal family in a neighbourhood of $z$. The \emph{Julia set} $J(f)$ is the complement in $\mathbb{C}$ of $F(f)$. An introduction to the properties of these sets was given in \cite{MR1216719}. The \emph{escaping set}, which was first studied for a general {\tef} in \cite{MR1102727}, is defined by $$I(f) = \{z : f^n(z)\rightarrow\infty\text{ as }n\rightarrow\infty\}.$$

Many authors have studied the Hausdorff dimension of $I(\thefun)$, or subsets of this set. We refer to \cite{falconer} for a definition of Hausdorff dimension, which we denote here by $\dim_H$. A key result is that of McMullen \cite{MR871679}, who showed that $\dim_H J(\thefun) = 2$. It is well-known that it follows from his construction that $\dim_H I(\thefun) = 2$.

When $\lambda \in (0, e^{-1})$ it is known -- see \cite{MR758892, MR873428} -- that $J(\thefun)$ consists of an uncountable set of unbounded curves known as a \emph{Cantor bouquet}, and that each curve, except possibly for its finite endpoint, lies in $I(\thefun)$. In two celebrated papers \cite{MR1680622, MR1696203} Karpi{\'n}ska proved the paradoxical fact that the set consisting of these curves excluding their finite endpoints has Hausdorff dimension $1$, whereas the set of finite endpoints has Hausdorff dimension $2$. A somewhat related result is that of Karpi{\'n}ska and Urba{\'n}ski \cite{MR2197375} who defined subsets of $I(\thefun)$ of Hausdorff dimension $d$, for each $d \in (1,2)$.

In fact, all these papers considered a subset $A(\thefun)$ of $I(\thefun)$ known as the \emph{fast escaping set}. The fast escaping set was introduced in \cite{MR1684251}, and can be defined \cite{Rippon01102012} for a general {\tef} $f$ by
\begin{equation}
\label{Adef}
A(f) = \{z : \text{there exists } \ell \isnatural \text{ such that } |f^{n+\ell}(z)| \geq M^n(R,f), \text{ for } n \isnatural\}.
\end{equation}
Here the \emph{maximum modulus function} is defined by $M(r,f) = \max_{|z|=r} |f(z)|,$ for $r \geq 0.$ We write $M^n(r,f)$ to denote repeated iteration of $M(r,f)$ with respect to the variable $r$. In (\ref{Adef}), $R > 0$ is such that $M^n(R,f)\rightarrow\infty$ as $n\rightarrow\infty$.

It is well-known that the sets constructed in \cite{MR1680622, MR1696203} and \cite{MR871679} lie in $A(\thefun)$. We show in Section~\ref{SKU} that this is also the case for the sets defined in \cite{MR2197375}.

It seems that little is known about the dimension of subsets of $I(\thefun)\backslash A(\thefun)$. Indeed, very little is known about the dimension of  $J(f) \cap(I(f)\backslash A(f))$ for any {\tef} $f$, with three notable exceptions. Bishop \cite{Bish2} constructed a {\tef} $f_1$ such that $\dim_H I(f_1)\backslash A(f_1) = 0$. At the other extreme, Eremenko and Lyubich \cite[Example 4]{MR918638} constructed a {\tef} $f_2$ such that $J(f_2) \cap(I(f_2)\backslash A(f_2))$ has positive area.

The remaining exception concerns the \emph{Eremenko-Lyubich class}, $\mathcal{B}$, which is defined as the class of {\tef}s for which the set of singular values is bounded. Clearly $\thefun\in\mathcal{B}$, for $\lambda\ne 0$. If $f \in\mathcal{B}$, then $I(f) \subset J(f)$ \cite[Theorem 1]{MR1196102} and so 
\begin{equation}
\label{setthing}
J(f) \cap(I(f)\backslash A(f)) = I(f)\backslash A(f), \qfor f \in\mathcal{B}. 
\end{equation}

Bergweiler and Peter \cite{MR3054344} studied the dimension of subsets of $I(f)$ consisting of points for which there is a completely general upper bound on the rate of escape. The following is part of \cite[Theorem 1]{MR3054344}.
\begin{theorem}
\label{TBP}
Suppose that $f \in \mathcal{B}$ and that $(p_n)_{n\geq 0}$ is a sequence of real numbers tending to infinity. Define $$\operatorname{Esc}(f,(p_n)) = \{ z \in I(f) : \text{ there exists } N \geq 0 \text{ such that } |f^n(z)| \leq p_n, \text{ for } n \geq N\}.$$ Then $\dim_H \operatorname{Esc}(f,(p_n)) \geq 1.$
\end{theorem}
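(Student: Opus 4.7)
The plan is to work in logarithmic coordinates and construct, via an iterated pullback procedure, a Cantor set of Hausdorff dimension $1$ inside $\operatorname{Esc}(f,(p_n))$. Since $f \in \mathcal{B}$, I can fix $R>0$ so that $V=\{|w|>R\}$ contains no singular value of $f$. For any component $T$ of $f^{-1}(V)$ and any component $\tilde T$ of $\exp^{-1}(T)$, the logarithmic transform $F\colon \tilde T \to H_R := \{\Re w>\log R\}$ defined by $\exp\circ F = f\circ\exp$ is a conformal bijection satisfying the Eremenko--Lyubich expansion estimate $|F'(\zeta)| \geq (4\pi)^{-1}(\Re F(\zeta) - \log R)$. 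Passing to a subsequence, which only weakens the conclusion, I may assume that the quantities $\rho_n := \log p_n$ grow as slowly as convenient.

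For each $n$ I pick $N_n \asymp \rho_n$ disjoint target squares of side $2\pi$ in $H_R \cap \{\log R + 1 \leq \Re w \leq \rho_n\}$, stacked in the imaginary direction, and assign to each such square a distinct inverse branch of $F$ mapping $H_R$ into itself (such branches exist because infinitely many tract components lie at large imaginary heights, each one carrying $H_R$ onto a small tile strictly inside $H_R$). For an itinerary $(k_1, k_2, \dots)$ with $k_j \in \{1, \dots, N_j\}$, the composition of inverse branches corresponding to $(k_1, \dots, k_n)$ sends the $k_n$-th target square at stage $n$ to a tile $\tilde Q_n^{(k_1,\dots,k_n)} \subset H_R$. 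Two ingredients guarantee that this nested family has the right geometry: the Eremenko--Lyubich expansion, which implies that the stage-$n$ tile has diameter comparable to $\prod_{j=1}^n \rho_j^{-1}$, and the Koebe distortion theorem, which ensures that sibling tiles (sharing a common parent) are separated by a distance comparable to the tile diameter times $N_n$. The intersection $E:=\bigcap_n \bigcup_{(k_1,\dots,k_n)} \tilde Q_n^{(k_1,\dots,k_n)}$ satisfies $\Re F^n(\zeta) \leq \rho_n$ and $\Re F^n(\zeta)\to\infty$ for every $\zeta \in E$, so $\exp(E)\subset \operatorname{Esc}(f,(p_n))$.

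For the dimension lower bound, distribute mass on $E$ by giving equal weight $1/\prod_{j\leq n} N_j$ to each level-$n$ cylinder; the mass distribution principle, combined with the separation-to-diameter estimate above, yields
\[
\dim_H E \;\geq\; \liminf_{n\to\infty}\frac{\sum_{j=1}^n \log N_j}{\sum_{j=1}^n \log \rho_j} \;=\; 1,
\]
with the choice $N_j \asymp \rho_j$, and since $\exp$ is locally bi-Lipschitz on bounded sets, $\dim_H \exp(E) \geq 1$. The main obstacle is quantitative control when $(p_n)$ grows very slowly: one must verify that enough target squares fit under the moving ceiling $\Re w=\rho_n$ at every stage, that the Koebe distortion constants stay uniformly bounded along every itinerary despite the tiles being forced close to the horizontal boundary $\Re w =\rho_n$, and that the sibling-separation estimate is uniform in $n$. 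Each of these is a sharper form of a standard Eremenko--Lyubich tract estimate, and their simultaneous orchestration along an infinite itinerary is where the real work lies.
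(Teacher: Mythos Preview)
The paper does not prove this theorem; it is quoted verbatim as ``part of \cite[Theorem~1]{MR3054344}'' (Bergweiler--Peter) and used as a black box. So there is no proof in the paper to compare your proposal against.

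That said, your outline is in the spirit of the original Bergweiler--Peter argument: logarithmic coordinates on a tract, the Eremenko--Lyubich expansion estimate, a nested family of pullbacks indexed by itineraries, and the mass distribution principle. Two points deserve tightening. First, ``passing to a subsequence'' is not the correct reduction: a subsequence of $(p_n)$ only constrains some indices, whereas membership in $\operatorname{Esc}(f,(p_n))$ requires a bound at \emph{every} large $n$. The right move is to replace $(p_n)$ by a pointwise smaller sequence $(\tilde p_n)$ with $\tilde p_n\to\infty$, since then $\operatorname{Esc}(f,(\tilde p_n))\subset\operatorname{Esc}(f,(p_n))$. Second, the nesting in your construction is not automatic: for the level-$n$ tile associated to $(k_1,\dots,k_n)$ to sit inside the level-$(n-1)$ tile associated to $(k_1,\dots,k_{n-1})$, the inverse branch you apply at the $n$-th step must carry the $k_n$-th target square at stage $n$ \emph{into} the $k_{n-1}$-th target square at stage $n-1$, and this has to be arranged explicitly (in Bergweiler--Peter this is done by choosing the target squares and the branches together so that each square at stage $n-1$ already contains $\asymp\rho_n$ preimages of stage-$n$ squares). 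Your final paragraph correctly flags the remaining uniformity issues; they are exactly where the work in the original paper lies.
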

Suppose that $f\in\mathcal{B}$. In contrast to Bishop's result, it follows from Theorem~\ref{TBP} and (\ref{setthing}) that $\dim_H J(f) \cap (I(f)\backslash A(f)) \geq 1$. Rempe and Stallard \cite{MR2587450} showed that there exists a {\tef} $f_3 \in \mathcal{B}$ such that $\dim_H I(f_3) = 1$. It follows from Theorem~\ref{TBP} and (\ref{setthing}) that $\dim_H J(f_3) \cap (I(f_3)\backslash A(f_3)) = 1$. \\ 

In this paper we show that various subsets of $J(\thefun) \cap (I(\thefun)\backslash A(\thefun))$ have Hausdorff dimension exactly equal to $1$. We do not use Theorem~\ref{TBP}, since all our results give an exact value for the Hausdorff dimension of sets defined by a two-sided inequality on the rate of escape. However, it seems plausible that there is some relationship between these results.

For a general {\tef} $f$ we define the \emph{uniformly slowly escaping set} by
\begin{equation}
\label{LUdef}
\Lunif(f) = \{z : \exists N\isnatural, \ R > 1, \ 0 < C_1 < C_2 \text{ s.t. } C_1R^{n} \leq |f^{n}(z)| \leq C_2R^{n}, \text{ for } n\geq N \}.
\end{equation}
Roughly speaking, this set consists of those points for which the rate of escape is eventually uniformly slow. Our first result concerns the Hausdorff dimension of $\Lunif(\thefun)$.
\begin{theorem}
\label{Tmain}
Suppose that $\lambda\ne 0$. Then $\dim_H \Lunif(\thefun) = 1$.
\end{theorem}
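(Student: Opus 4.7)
The plan is to analyse $\Lunif(\thefun)$ via \sais. Fix $R > 1$, partition $\mathbb{C}$ into annuli $A_n = \{z : R^n \leq |z| < R^{n+1}\}$, and define $s_n(z)$ so that $\thefun^n(z) \in A_{s_n(z)}$. Membership $z \in \Lunif(\thefun)$ is equivalent to the sequence $(s_n(z))$ being eventually linear in $n$ with bounded deviation, so $\Lunif(\thefun)$ is a countable union, over rational $\alpha, \beta > 0$ and $N \isnatural$, of the sets $S_{\alpha,\beta,N}$ consisting of $z$ with $\alpha n \leq s_n(z) \leq \beta n$ for $n \geq N$. The relevant exponential geometry is that $\thefun'(z) = \thefun(z)$, so $|(\thefun^n)'(z)| = \prod_{k=1}^n |\thefun^k(z)|$; the preimage of $A_n$ under $\thefun$ is the vertical strip $V_n = \{z : \log(R^n/|\lambda|) \leq \Real(z) < \log(R^{n+1}/|\lambda|)\}$ of fixed width $\log R$; and the univalent branches of $\thefun^{-1}$ are parametrised by integer shifts of the imaginary part by $2\pi$.

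For the lower bound $\dim_H \Lunif(\thefun) \geq 1$, I would construct a Cantor-like subset of $\Lunif(\thefun)$ with a prescribed linear \sai\ $s_n = n + s_0$ (for $s_0$ sufficiently large) by pulling back bounded pieces of $A_{s_n}$ along compositions of univalent inverse branches of $\thefun$. The derivative bound makes the level-$n$ cylinders exponentially small, while the branching at step $k$ produces enough components that the uniform probability measure $\mu$ on the limit Cantor set satisfies $\mu(B(x,r)) \lesssim r$; the mass distribution principle then yields $\dim_H \geq 1$. For the upper bound $\dim_H \Lunif(\thefun) \leq 1$, countable stability reduces the problem to bounding $\dim_H S_{\alpha, \beta, N}$; the same cylinder system, inflated by a factor $K^n$ to account for the latitude in $s_n \in [\alpha n, \beta n]$, provides a cover whose $s$-dimensional Hausdorff sum tends to zero for every $s > 1$.

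The main obstacle will be distortion control. The preimages $\thefun^{-k}(V_n)$ are non-convex regions cut out by constraints involving $|\lambda| e^{\Real(z)} \cos(\Imag(z) + \arg \lambda)$, and the Moran cylinders have very different aspect ratios across successive levels. Careful Koebe-type distortion estimates for the univalent inverse branches of $\thefun$ will be needed both to justify the cylinder diameter bound and to verify $\mu(B(x,r)) \lesssim r$ uniformly across all scales $r$, not only the discrete scales at which the cylinder diameters land. A further subtlety is handling every $\lambda \in \mathbb{C} \setminus \{0\}$ uniformly, since the geometry of $V_n$ depends on both $|\lambda|$ and $\arg \lambda$, so the construction must be arranged robustly in these parameters rather than relying on the special Cantor-bouquet regime $\lambda \in (0, e^{-1})$.
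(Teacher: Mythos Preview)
Your plan matches the paper's approach: the lower bound comes from a Cantor set with a prescribed (eventually) linear annular itinerary, and the upper bound from a cylinder covering of points whose itineraries are sandwiched between two linear sequences. Two remarks on execution. First, the paper invokes McMullen's density lemma (Lemma~3.1 here) rather than the bare mass-distribution bound $\mu(B(x,r)) \lesssim r$; because the scale gaps $d_{n-1}/d_n \sim R^{t_{n-1}}$ grow without bound, the Frostman condition at intermediate radii is not automatic from the cylinder masses alone, and McMullen's formulation---comparing $\sum_{m\leq n} |\log \Delta_m|$ with $|\log d_n|$---sidesteps this cleanly. Second, the distortion difficulty you anticipate is milder than Koebe: since $\thefun''/\thefun' \equiv 1$, a direct uniform-expansion estimate (Lemma~3.2 and Corollary~3.1) gives bounded distortion on the iterated cylinders, uniformly in $\lambda$, so the worry about handling all $\lambda\in\mathbb{C}\setminus\{0\}$ robustly largely evaporates. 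For the upper bound the paper in fact proves $\dim_H \Lflat(\thefun) \leq 1$ via a general covering theorem (Theorem~4.1) and then uses $\Lunif(\thefun) \subset \Lflat(\thefun)$; your direct cover of $S_{\alpha,\beta,N}$ amounts to the same computation specialised to $g_{p,n}=R^{\alpha n}$, $h_{p,n}=R^{\beta n}$, where the key hypothesis $\log g_{p,n}/\log^+\log h_{p,n+1}\to\infty$ is immediate.
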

In Section~\ref{Sese} we give, for a general {\tef}, a necessary and sufficient condition for the uniformly slowly escaping set to be non-empty. We also prove that when the uniformly slowly escaping set is not empty, it has a number of familiar properties which show that, in general, this is a dynamically interesting set.

For a general {\tef} $f$, $\Lunif(f)$ is a subset of the \emph{slow escaping set}, introduced by Rippon and Stallard \cite{MR2792984}, and defined by
\begin{equation}
\label{Ldef}
L(f) = \{z \in I(f) : \text{ there exists } R > 1 \text{ s.t. } |f^{n}(z)| \leq R^{n}, \text{ for } n\isnatural \}.
\end{equation}
It was shown in  \cite{MR2792984} that $L(f)\ne\emptyset$, that $J(f)$ is dense in $L(f)$ and also that $J(f)=\partial L(f)$. 

It follows from Theorem~\ref{TBP} that $\dim_H L(\thefun) \geq 1$. Nothing more seems to be known about the actual dimension of $L(\thefun)$. As a step in that direction, we consider, for a general {\tef} $f$, a set which is a relatively large subset of $L(f)$ and which contains $\Lunif(f)$. First, for $p\isnatural$, let $\log^{+p}$ denote $p$ iterations of the $\log^+$ function, which is defined by
\begin{equation*}
 \log^+ (x) =
  \begin{cases}
   \log x, &\text{if } x \geq 1, \\
   0,  &\text{ otherwise}.
  \end{cases}
\end{equation*}

For a general {\tef}, $f$, we define
\begin{equation}
\label{Lflatdef}
\Lflat(f) = \{z : \exists R > 1, \ N, p\isnatural \text{ s.t. } n^{\log^{+p}(n)} \leq |f^{n}(z)| \leq R^n, \text{ for } n\geq N \}.
\end{equation}
Note that the orbits of points in $\Lflat(f)$ are constrained to lie within certain annuli. The fact that $$\Lunif(f) \subset \Lflat(f)\subset L(f) \subset I(f)\backslash A(f)$$ follows from (\ref{LUdef}), (\ref{Ldef}), (\ref{Lflatdef}) and well-known properties of the maximum modulus function. Our result concerning the dimension of $\Lflat(\thefun)$ is as follows.
\begin{theorem}
\label{Tthird}
Suppose that $\lambda\ne 0$. Then $\dim_H \Lflat(\thefun) = 1$.
\end{theorem}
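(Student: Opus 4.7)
The lower bound $\dim_H \Lflat(\thefun) \geq 1$ is immediate from Theorem~\ref{Tmain} together with the inclusion $\Lunif(\thefun) \subset \Lflat(\thefun)$ noted after (\ref{Lflatdef}). For the upper bound, the plan is to use the countable stability of Hausdorff dimension to reduce to a single-parameter statement: write
\[ \Lflat(\thefun) = \bigcup_{p,N \isnatural,\ R \in \mathbb{Q} \cap (1,\infty)} \Lflat^{p,N,R}(\thefun), \]
where $\Lflat^{p,N,R}(\thefun)$ is the set of $z$ for which $n^{\log^{+p}(n)} \leq |\thefun^n(z)| \leq R^n$ holds for every $n \geq N$, and then show that $\dim_H \Lflat^{p,N,R}(\thefun) \leq 1$ for every fixed triple $(p,N,R)$.

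To handle a fixed $\Lflat^{p,N,R}(\thefun)$ I would exploit the annular itinerary framework that underlies Theorem~\ref{Tmain}. Partition $\mathbb{C}$ into annuli $A_k = \{z : \rho^k \leq |z| < \rho^{k+1}\}$ for some $\rho > 1$, and record the annular itinerary $(s_n)_{n \geq N}$ of a point $z$ by $\thefun^n(z) \in A_{s_n}$. The two-sided inequality defining $\Lflat^{p,N,R}$ forces $s_n$ to lie in an admissible range that grows only polynomially in $n$, so the set $\Lflat^{p,N,R}(\thefun)$ decomposes into a (relatively sparse) family of level sets indexed by admissible itineraries. For each admissible itinerary I would cover the corresponding level set by iterated preimage components of a bounded ball, one per fundamental strip intersecting the prescribed annulus at each step, using the derivative identity $|(\thefun^n)'(z)| = \prod_{k=0}^{n-1} |\thefun^k(z)|$. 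The lower bound $|\thefun^k(z)| \geq k^{\log^{+p}(k)}$ guarantees that the resulting preimage components are extremely thin, with diameters decaying much faster than any geometric rate.

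The main obstacle is the delicate bookkeeping needed to balance the combinatorial explosion of admissible annular itineraries against the superpolynomial metric contraction of inverse branches, so that the $s$-dimensional Hausdorff measure of $\Lflat^{p,N,R}(\thefun)$ vanishes for every $s > 1$. This is precisely the balancing act at the heart of Karpi{\'n}ska's classical proof that the Cantor bouquet has dimension $1$, and I expect the argument to parallel the corresponding calculation inside the proof of Theorem~\ref{Tmain}; the additional freedom permitted by $\Lflat$ (in comparison with $\Lunif$) enlarges the set of admissible itineraries only mildly, by a factor absorbed into the contraction once $p$ and $N$ are fixed. Once the bound is established for each $\Lflat^{p,N,R}(\thefun)$, the full result follows by taking the countable union.
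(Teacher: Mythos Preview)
Your proposal has a genuine circularity problem. In the paper, the logical order is the reverse of what you assume: Theorem~\ref{Tthird} is proved first, and Theorem~\ref{Tmain} is then deduced from it in three lines, via the chain $I_R(\underline{t}) \subset \Lunif(\thefun) \subset \Lflat(\thefun)$ together with $\dim_H \Lflat(\thefun) = 1$. So invoking Theorem~\ref{Tmain} for your lower bound, and especially pointing to ``the corresponding calculation inside the proof of Theorem~\ref{Tmain}'' for your upper bound, is not available: that proof contains no such calculation. The substantive work all lives in the proof of Theorem~\ref{Tthird} (more precisely, in the two general tools Theorem~\ref{lbtheo} and Theorem~\ref{ubtheo}).

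For the lower bound, the paper does not pass through $\Lunif$; it picks a concrete itinerary $\underline{t} = 1\,1\,1\,2\,3\,\ldots$, applies Theorem~\ref{lbtheo} to get $\dim_H I_R(\underline{t}) \geq 1$, and checks $I_R(\underline{t}) \subset \Lflat(\thefun)$ directly. For the upper bound, rather than decomposing over rational $R$ and running a bespoke covering argument, the paper sets $g_{p,n} = n^{\log^{+p}(n)}$ and $h_{p,n} = e^{pn}$, notes that $\Lflat(\thefun) \subset T_{g,h}$ (any $R>1$ is absorbed by taking $p$ large), and applies the packaged Theorem~\ref{ubtheo}, whose hypothesis~\eqref{gheq} is immediate here. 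Your sketch for the upper bound is morally in the right direction---it is an outline of what Theorem~\ref{ubtheo} does in this particular case---but it remains a plan rather than a proof, and the ``delicate bookkeeping'' you flag is exactly what Theorem~\ref{ubtheo} carries out in general; you would be re-deriving it ad hoc.
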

Theorem~\ref{Tthird} is a consequence of the size of the annuli in the definition of $\Lflat(\thefun)$. In particular, Theorem~\ref{Tthird} shows that if $\dim_H L(\thefun) > 1$, then the vast majority of points in $L(\thefun)$ must have an extremely slowly escaping subsequence.

The techniques that we use to prove Theorem~\ref{Tthird} also allow us to construct subsets of $I(\thefun)\backslash(L(\thefun)\cup A(\thefun))$ which have Hausdorff dimension equal to $1$. For example, for a {\tef} $f$, Rippon and Stallard \cite{MR2792984} defined the \emph{moderately slow escaping set} by $$M(f) = \{z \in I(f) : \text{there exists } C > 0 \text{ such that } |f^{n}(z)| \leq \exp(e^{Cn}), \text{ for } n\isnatural \}.$$ In a similar way to (\ref{Lflatdef}), we define a subset of $M(f)\backslash L(f)$ by
\begin{equation}\label{Mflatdef}\Mflat(f) = \{z : \exists N, p\isnatural \text{ s.t. } e^{n\log^{+p}(n)} \leq |f^{n}(z)| \leq \exp(e^{pn}), \text{ for } n\geq N \}.\end{equation} Our result concerning the dimension of $\Mflat(\thefun)$ is as follows.
\begin{theorem}
\label{Tfourth}
Suppose that $\lambda\ne 0$. Then $\dim_H \Mflat(\thefun) = 1$.
\end{theorem}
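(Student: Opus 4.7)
The plan is to adapt, in the same spirit as Theorem~\ref{Tthird}, the annular-itinerary Cantor-set construction to the growth window defining $\Mflat(\thefun)$. First I would fix a decomposition $\{|z|\ge R_0\} = \bigcup_k A_k$ with $A_k = \{r_k \le |z| < r_{k+1}\}$ at an appropriate scale; the two-sided bound $e^{n\log^{+p}(n)} \le |\thefun^n(z)| \le \exp(e^{pn})$ then becomes the assertion that the annular itinerary $(s_n)$ of $z$ lies in a window $a_n(p) \le s_n \le b_n(p)$ for all sufficiently large $n$, with $b_n(p) - a_n(p) \to \infty$. There is thus abundant room to choose admissible itineraries, in fact more so than for $\Lflat(\thefun)$.

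I would then fix one such itinerary and, using $\thefun^{n+1}(z) = \lambda e^{\thefun^n(z)}$, translate the annular constraint at step $n+1$ into a vertical-strip constraint $\Real \thefun^n(z) \in [\alpha_n, \beta_n]$ on $\thefun^n(z)$. The intersection of $A_{s_n}$ with this strip is a rectangular target. The countably many inverse branches of $\thefun$, differing by $2\pi i$, that map this target into the admissible rectangle at step $n-1$ provide the branching for a Cantor-like construction. Because $|(\thefun^{-1})'(w)| = 1/|w|$ and $|w|$ is at least $e^{n\log^{+p}(n)}$ at step $n$, the generation-$n$ diameters shrink in a controlled, rapid fashion, so a mass-distribution argument modelled on the proof of Theorem~\ref{Tthird} gives $\dim_H \Mflat(\thefun) \ge 1$. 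The matching upper bound $\dim_H \Mflat(\thefun) \le 1$ should come from the essentially one-parameter character of the construction — the $2\pi i$-shift parameter is the sole source of branching — exactly as for the preceding theorems.

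The main obstacle I anticipate is that the upper and lower growth rates in $\Mflat$ sit on very different scales. The strip constraint on $\Real\thefun^n(z)$ coming from the step-$(n+1)$ modulus bound is only $\Real\thefun^n(z) \lesssim e^{p(n+1)}$, much smaller than the modulus bound $|\thefun^n(z)| \le \exp(e^{pn})$, so across most of the admissible region at step $n$ one is forced to take $|\Imag\thefun^n(z)|$ very large. Verifying that the preceding annulus nevertheless has enough vertical extent to accommodate sufficiently many $2\pi i$-shifted preimages — and that the resulting distortion estimates are uniform in the parameter $p$ — is the technical heart of the argument.
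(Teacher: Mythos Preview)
Your plan is essentially the paper's approach: a lower bound via a Cantor set built from inverse branches along a fixed annular itinerary, and an upper bound via an efficient covering. The paper has already packaged both steps as general results (Theorems~\ref{lbtheo} and~\ref{ubtheo}), so its proof of Theorem~\ref{Tfourth} is only a few lines: it plugs in one specific itinerary for the lower bound and one specific pair of envelope sequences for the upper bound.

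The ``main obstacle'' you identify, however, is a red herring, and pursuing it will make your argument harder than it needs to be. For the \emph{lower} bound you have already said you will fix a single itinerary; so do that, and pick one with moderate growth. The paper takes $t'_n = (n+1)^2$ in the partition $A_k = \{R^k \le |z| < R^{k+1}\}$, giving $R^{(n+1)^2} \le |\thefun^n(z)| < R^{(n+1)^2+1}$, which sits comfortably inside the $\Mflat$ window. Once the itinerary is fixed, the gap between the upper and lower $\Mflat$ envelopes is irrelevant: you are not trying to fill the whole window, only to exhibit one subset of dimension at least $1$. The condition that actually drives the McMullen estimate to dimension $\ge 1$ is the slowly-growing condition $t_n/\sum_{k<n}t_k \to 0$, which $(n+1)^2$ satisfies; the ``vertical extent'' and distortion issues you mention are then no different from those already handled for $\Lflat$.

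For the \emph{upper} bound, your phrase ``essentially one-parameter character of the construction'' is too vague to be a proof sketch. The paper's covering argument (Theorem~\ref{ubtheo}) applies to any two-sided constraint $g_{p,n} \le |\thefun^n(z)| \le h_{p,n}$ provided
\[
\frac{\log g_{p,n}}{\log^+\log h_{p,n+1}} \to \infty.
\]
Here $g'_{p,n} = e^{n\log^{+p} n}$ and $h'_{p,n} = \exp(e^{pn})$, so this ratio is $n\log^{+p} n/(p(n+1)) \to \infty$. In other words, the very scale mismatch you flag is exactly what makes the upper bound work: the lower envelope forces $\Real \thefun^n(z)$ (equivalently $\log|\thefun^{n+1}(z)|$) to be large enough, relative to $\log\log h_{p,n+1}$, that the covering sums converge at every exponent $>1$. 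You should isolate and verify this ratio condition rather than worry about accommodating the full vertical range.
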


We prove our results using the idea of an {\sai}. Before defining this concept, we briefly discuss a different type of itinerary which has frequently been used to study the dynamics of functions in the exponential family.

Since $|\thefun(z)| = |\lambda| e^{\Real(z)}$, it follows that the orbit of a point in $I(\thefun)$ must eventually remain in the right half-plane $\mathbb{H} = \{z : \Real(z) > 0\}$. Many authors -- see, for example, \cite{MR1721835,MR758892} and \cite{MR2197375} -- have considered itineraries of points in $I(\thefun)$ defined in the following way. First we partition $\mathbb{H}$ into half-open strips
\begin{equation}
\label{Vdef}
V_n = \{ z \in \mathbb{H} : (2n-1)\pi \leq \Imag(z) < (2n+1)\pi, \text{ for } n\in\mathbb{Z}\}.
\end{equation}
Suppose that $\underline{s} = s_0 s_1 s_2 \ldots$ is a sequence of integers. We say that a point $z$ has \emph{itinerary} $\underline{s} = \underline{s}(z) = s_0 s_1 s_2 \ldots$ if $\thefun^n(z) \in V_{s_n}$, for $n\geq 0$. 

For some types of itinerary it can be shown that the set of points with such an itinerary is -- in some sense -- large. For example, it follows from McMullen's proof \cite{MR871679} (and see also \cite{MR1680622}) that the set $$\{ z \in I(\thefun) : 2\pi|s_n(z)| \geq |\thefun^n(z)|/2, \text{ for } n\geq 0\}$$ has Hausdorff dimension $2$.

The concept of an \emph{annular itinerary} was introduced by Rippon and Stallard \cite{2013arXiv1301.1328R}. Suppose that $f$ is a general {\tef}, and let $(R_n)_{n\geq 0}$ be a strictly increasing sequence of positive real numbers such that $R_n\rightarrow\infty$ as $n\rightarrow\infty$. The strips $V_n$ in (\ref{Vdef}) are replaced by half-open annuli
\begin{equation*}
A_n = \{ z : R_{n-1} \leq |z| < R_n\}, \qfor n\isnatural,
\end{equation*}
and $A_0$ is defined as $\{ z : |z| < R_0\}$. Suppose that $\underline{t} = t_0 t_1 t_2 \ldots$ is a sequence of non-negative integers. If $f^n(z) \in A_{t_n}$, for $n\geq 0$, then we say that the point $z$ has annular itinerary $\underline{t} = \underline{t}(z) = t_0 t_1 t_2 \ldots$ with respect to the partition $(A_n)_{n\geq 0}$.

Rippon and Stallard \cite{2013arXiv1301.1328R} let $R_0>0$ be sufficiently large that $M^n(R_0,f)\rightarrow\infty$ as $n\rightarrow\infty$, and then set $R_n = M^n(R_0,f)$, for $n\isnatural$. They showed that, with this choice of partition $(A_n)_{n\geq 0}$, there is a very broad class of annular itineraries such that the set of points with such an itinerary contains a point in $J(f)$. For more information regarding the properties of these annular itineraries, we refer to \cite{2013arXiv1301.1328R}.

Annular itineraries are a natural choice when studying points which escape to infinity with different rates. The {\sais} used in our paper are defined using annuli of constant modulus, which seems a natural choice when considering points in the slow escaping set. First we choose a value of $R > 1$, and then set $R_n = R^{n+1}$, for $n\geq 0$. This construction of the partition $(A_n)_{n\geq 0}$ should be considered to be in place throughout the remainder of this paper. Note that this construction depends on $R$. Here, and elsewhere, we suppress some dependencies for simplicity of notation, and retain only dependencies which need to remain explicit.

We use the following notation $$I_{R}(\underline{t}) = \{ z : z \text{ has {\sai} } \underline{t} \text{ with respect to the partition } (A_n)_{n\geq 0}\}.$$

We are interested in a particular type of {\sai}. We say that an {\sai} $\underline{t} = t_0 t_1 t_2 \ldots$ is \emph{non-zero} if $t_n \ne 0$, for $n\geq 0$, \emph{escaping} if $t_n\rightarrow\infty$ as $n\rightarrow\infty$, \emph{admissible} if
$e^{t_n} > t_{n+1}$, for $n\geq 0$,
and \emph{slowly-growing} if
\begin{equation}
\label{G3}
\lim_{n\rightarrow\infty} \frac{t_n}{\sum_{k=1}^{n-1} t_k} = 0.
\end{equation}

Our main result regarding {\sais} is as follows.
\begin{theorem}
\label{Tfirst}
Suppose that $\lambda\ne 0$, $R > 1$ and $\underline{t}$ is an escaping {\sai}. Then $\dim_H I_{R}(\underline{t}) \leq 1.$ Moreover, there exists $R_0 = R_0(\lambda) > 1$ such that if, in addition, $R \geq R_0$ and $\underline{t}$ is non-zero, admissible and slowly-growing, then $\dim_H I_{R}(\underline{t})~=~1.$
\end{theorem}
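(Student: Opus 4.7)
The proof splits naturally into upper and lower bounds. For the upper bound, which uses only that $\underline t$ is escaping, I index $I_R(\underline t)$ by an auxiliary strip itinerary: for each $(s_0,\ldots,s_{N-1})\in\mathbb{Z}^N$, let $\Omega_{\underline s,N}$ denote the set of $z\in I_R(\underline t)$ with $\thefun^n(z)\in V_{s_n}$ for $0\le n<N$. On $\Omega_{\underline s,N}$ the map $\thefun^N$ has a univalent inverse branch $L_{s_0}\circ\cdots\circ L_{s_{N-1}}$, where $L_s(w)=\log(w/\lambda)+2\pi is$ maps $\mathbb{C}\setminus(-\infty,0]$ into $V_s$. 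Since $L_s$ sends any annulus $A_t$ to a rectangle of dimensions $\log R\times 2\pi$, the image $L_{s_{N-1}}(A_{t_N})$ has diameter $O(1)$ and lies in $A_{t_{N-1}}$; each subsequent $L_{s_k}$ acts on a set in $A_{t_{k+1}}$ and contracts distances by $|w|^{-1}\asymp R^{-t_{k+1}}$ with bounded Koebe distortion. Hence $\operatorname{diam}(\Omega_{\underline s,N})\le CR^{-(t_1+\cdots+t_{N-1})}$, while the number of non-empty prefixes is bounded by $\prod_{n=0}^{N-1}O(R^{t_n})$, counting strips meeting each $A_{t_n}$. For any $s>1$, the sum $\sum_{\underline s}\operatorname{diam}(\Omega_{\underline s,N})^s$ is at most $C'R^{t_0+(1-s)(t_1+\cdots+t_{N-1})+O(N)}$, which tends to $0$ as $N\to\infty$ because $t_n\to\infty$ forces the exponent to $-\infty$ faster than linearly in $N$. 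This yields $\dim_H I_R(\underline t)\le 1$.

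For the lower bound I would choose $R_0$ so large that two uniform estimates hold: Koebe distortion on each pulled-back annulus produces a fixed constant independent of $n$, and the rectangle $L_{s_n}(A_{t_{n+1}})$ sits strictly inside $A_{t_n}$ for every $s_n$ with $|s_n|$ in the range $[R^{t_n}/(2\pi),R^{t_n+1}/(2\pi))$. Under admissibility $e^{t_n}>t_{n+1}$, the real part $\Real z\approx t_{n+1}\log R$ of such a rectangle is dominated (for $R$ large) by $|\Imag z|\approx 2\pi|s_n|$, ensuring the rectangle lies in $A_{t_n}$; the non-zero hypothesis keeps $t_n\ge 1$, so the orbit avoids the singular value region. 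Iterating the pullback construction yields a Cantor-like family whose level-$N$ members form a disjoint collection of $N_N\asymp R^{t_0+\cdots+t_{N-1}}$ compact cells, each of diameter $\delta_N\asymp R^{-(t_1+\cdots+t_{N-1})}$, whose nested intersection is contained in $I_R(\underline t)$.

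To complete the lower bound I would assign each level-$N$ cell equal mass $1/N_N$ to obtain a probability measure $\mu$ on the limit set; by geometric separation, a ball of radius $r\in[\delta_{N+1},\delta_N]$ meets only $O(1)$ level-$N$ cells, giving $\mu(B(x,r))\le C/N_N$. To deduce $\dim_H I_R(\underline t)\ge 1-\varepsilon$ via the mass distribution principle, the inequality $1/N_N\le C'\delta_{N+1}^{1-\varepsilon}$ must hold, which after taking logarithms amounts to
\[
(1-\varepsilon)\,t_N\le \varepsilon(t_1+\cdots+t_{N-1})+t_0+O(N).
\]
This is where the slowly-growing hypothesis \eqref{G3} enters decisively: $t_N=o(t_1+\cdots+t_{N-1})$ makes the inequality hold for every fixed $\varepsilon>0$ and all sufficiently large $N$, so letting $\varepsilon\to 0$ gives $\dim_H I_R(\underline t)\ge 1$. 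The main obstacle will be making precise the cardinality and separation claims in the Cantor construction, since these require a careful synthesis of admissibility (for real-part compatibility), non-zero (to avoid the singular value), and $R_0$ sufficiently large (to uniformise the Koebe distortion estimates along the orbit).
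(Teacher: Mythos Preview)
Your proposal is correct. The paper's own proof of Theorem~\ref{Tfirst} is very short: the upper bound is obtained by observing that $I_R(\underline t)\subset T_{g,h}$ with $g_{p,n}=R^{t_n}$, $h_{p,n}=R^{t_n+1}$ and invoking the general Theorem~\ref{ubtheo}, while the lower bound is an immediate application of Theorem~\ref{lbtheo}. The substantive work is therefore inside those two theorems, and there the mechanism matches yours closely: Theorem~\ref{ubtheo} is a covering argument by preimages of thin annular sectors $W_{n,m}=A(e^{m-1}g_n,e^m g_n)\cap\{\Real z\ge c_1\}$ (rather than by strip itinerary as you do), with distortion controlled via a nonlinearity bound $|f''/f'|\le M$ (Corollary~\ref{c1}) rather than Koebe; Theorem~\ref{lbtheo} builds the nested system from pullbacks of half-annuli and applies McMullen's density lemma (Lemma~\ref{mcmlemma}) in place of your explicit mass-distribution computation. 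The two routes produce identical asymptotics --- diameters $\asymp R^{-(t_1+\cdots+t_{N-1})}$, cell counts or densities governed by $R^{t_0+\cdots+t_{N-1}}$ --- and the slowly-growing hypothesis~\eqref{G3} enters at exactly the point you identify. What the paper's packaging buys is modularity: the same two theorems also yield Theorems~\ref{Tmain}, \ref{Tthird} and~\ref{Tfourth}. What yours buys is a self-contained argument for this particular statement. One small correction: your assertion that $L_{s_{N-1}}(A_{t_N})$ ``lies in $A_{t_{N-1}}$'' is false in general, but harmless --- you only need that this set has diameter $O(1)$ and, whenever $\Omega_{\underline s,N}\ne\emptyset$, meets $A_{t_{N-1}}$, which is enough for the contraction estimate.
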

\begin{remark}\normalfont
It seems surprising that, for a large class of {\sais}, the sets of points with the same {\sai} all have the same Hausdorff dimension. We note that there are annular itineraries of arbitrarily slow growth which satisfy the conditions of Theorem~\ref{Tfirst}. In other words, if $(p_n)_{n\geq 0}$ is a sequence of positive integers such that $p_n\rightarrow\infty$ as $n\rightarrow\infty$, then there exists an annular itinerary $\underline{t} = t_0 t_1 t_2 \ldots$ and $R>1$ such that $\dim_H I_R(\underline{t}) = 1$ and $t_n \leq p_n$, for $n\geq 0$.
\end{remark}
\begin{remark}\normalfont
We comment briefly on the final two conditions in the second part of Theorem~\ref{Tfirst}. The condition that the annular itinerary be admissible is required to ensure that $I_{R}(\underline{t})$ is not empty.
It is unclear if the condition (\ref{G3}) is essential, though it is required for our method of proof. It is a straightforward calculation to show that a sequence $(t_n)_{n\isnatural}$ which satisfies this condition also satisfies
\begin{equation}
\label{growthcond}
\log t_n = o(n) \text{ as } n\rightarrow\infty.
\end{equation}
However, the condition (\ref{growthcond}) is weaker than the condition (\ref{G3}). For example, consider the sequence defined by
\begin{equation*}
t_n = 2^{m^2}, \qfor (m-1)^3 \leq n < m^3, \ m\isnatural.
\end{equation*}
It can be shown that this sequence satisfies (\ref{growthcond}) but not (\ref{G3}). The techniques of this paper do not allow us to replace (\ref{G3}) with the apparently simpler condition (\ref{growthcond}).
\end{remark}
Finally, we note that the dimension of subsets of $J(\thefun)$ which lie outside of $I(\thefun)$ was studied in \cite[Theorem 2]{MR1680622} and \cite{MR1992945}. In addition, Pawelec and Zdunik \cite{2014arXiv1405.7784P} recently showed that, for certain values of $\lambda$, there exist indecomposable continua in $J(\thefun)$ which are of Hausdorff dimension $1$. These continua intersect with the fast escaping set. We refer to \cite{2014arXiv1405.7784P} for further details. \\

%
%
The structure of this paper is as follows. First, in Section~\ref{Shann}, we give some preliminary lemmas. In Section~\ref{Slower} we prove a theorem which gives a lower bound on the Hausdorff dimension of $I_{R}(\underline{t})$ for a certain type of annular itinerary. In Section~\ref{Supper} we prove a theorem which gives an upper bound on the Hausdorff dimension of certain sets. All our dimension results are consequences of these two theorems. In Section~\ref{Ssgi} we prove Theorem~\ref{Tmain}, Theorem~\ref{Tthird}, Theorem~\ref{Tfourth} and Theorem~\ref{Tfirst}. In Section~\ref{Sese}, we state and prove two results about the uniformly slowly escaping set. Finally, in Section~\ref{SKU}, we discuss, briefly, the result of Karpi{\'n}ska and Urba{\'n}ski mentioned earlier.
%
%
%
%
%
%
%
\section{Preliminary lemmas}
\label{Shann}
We start this section with two lemmas concerning functions in the exponential family. We define closed annuli and half-annuli, for $0 < r_1 < r_2$, by
\begin{equation}
\label{Hdef}
A(r_1, r_2) = \{z : r_1 \leq |z| \leq r_2\}\text{ and }H(r_1, r_2) = \{z \in A(r_1,r_2) :  \operatorname{Re}(z) \geq 0 \}.
\end{equation}
For $r>0$ and $a\in\mathbb{C}$, we write $B(a, r)$ for the open disc $\{ z : |z - a| < r\}$.

The first lemma provides an estimate on the density of preimages of one half-annulus in another; see Figure~\ref{f1}. Here, for measurable sets $U$ and $V$, we define $$\operatorname{dens}(U, V) = \frac {\operatorname{area} (U \cap V)}{\operatorname{area}(V)}, $$ where area$(U)$ denotes the Lebesgue measure of $U$.
\begin{figure}[ht]
	\centering
	\includegraphics[width=12cm,height=9cm]{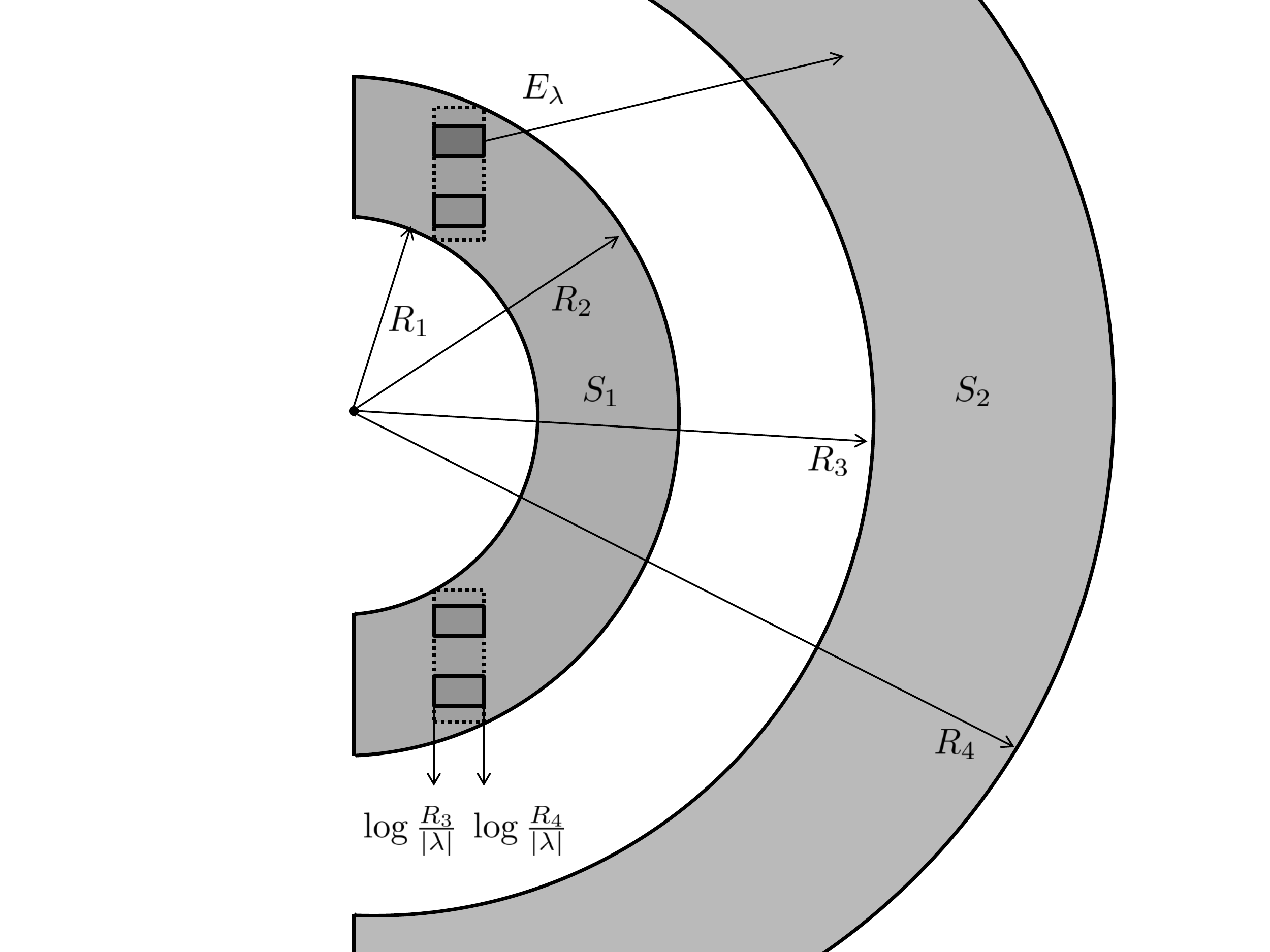}
	\caption{The set $\thefun^{-1}(S_2) \cap S_1$. One preimage component of $S_2$ is shown with a slightly darker background. The two rectangles constructed in the proof of Lemma~\ref{estlem} are shown with a dashed boundary. Note that $R_3$ is not necessarily larger than $R_2$.}
	\label{f1}
\end{figure}
\begin{lemma}
\label{estlem}
Suppose that $0 < R_1 < R_2$ and $0 < R_3 < R_4$ are such that
\begin{equation}
\label{est0}
R_2 > \max\left\{2 R_1, \ R_1 + 16\pi, \ 3 \log \frac{R_4}{|\lambda|}\right\},
\end{equation}
and
\begin{equation}
\label{est4}
R_3 > |\lambda|.
\end{equation}
Let $S_1 = {H(R_1, R_2)}$, $S_2 = {H(R_3, R_4)}$, and let $D$ be the union of all the components of $\thefun^{-1}(S_2)$ which are contained in $S_1$. Then
\begin{equation}
\label{denseq}
\operatorname{dens}(D, S_1) \geq \frac{1} {2\pi R_2}\log \frac{R_4}{R_3}.
\end{equation}
\end{lemma}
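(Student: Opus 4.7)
The plan is to exploit the elementary structure of $\thefun^{-1}(S_2)$ inside the right half-plane. Writing $\lambda = |\lambda|e^{i\theta}$ and $z = x+iy$, we have $|\thefun(z)| = |\lambda|e^x$ and $\arg\thefun(z) \equiv y+\theta \pmod{2\pi}$. Setting $\alpha = \log(R_3/|\lambda|)$ and $\beta = \log(R_4/|\lambda|)$, the condition $\thefun(z)\in S_2 = H(R_3,R_4)$ carves out of the right half-plane a countable disjoint union of closed rectangles, each of width $\beta-\alpha = \log(R_4/R_3)$ in the $x$-direction and height $\pi$ in the $y$-direction, with vertical centres spaced $2\pi$ apart. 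Hypothesis (\ref{est4}) gives $\alpha > 0$, so every such rectangle lies strictly to the right of the imaginary axis.

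Next I would introduce two explicit test rectangles inside $S_1 = H(R_1, R_2)$,
\[
\mathcal{R}_+ = [\alpha,\beta]\times[R_1,\sqrt{R_2^2 - \beta^2}], \qquad \mathcal{R}_- = [\alpha,\beta]\times[-\sqrt{R_2^2-\beta^2}, -R_1],
\]
which are the two dashed boxes in Figure~\ref{f1}. The containment $\mathcal{R}_\pm \subset S_1$ is a direct modulus check: the first inequality in (\ref{est0}) gives $R_1 < R_2/2$ and the third gives $\beta \leq R_2/3$, so in particular $\sqrt{R_2^2-\beta^2} > R_1$ and the Pythagorean inequality places every point of $\mathcal{R}_\pm$ inside the annulus $A(R_1,R_2)$.

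I would then count preimage components of $S_2$ lying entirely in $\mathcal{R}_+ \cup \mathcal{R}_-$. Since these components are $2\pi$-spaced vertically and have height $\pi$, the total count $N$ satisfies
\[
N \;\geq\; \frac{\sqrt{R_2^2 - \beta^2} - R_1}{\pi} - C
\]
for some absolute constant $C$ coming from boundary-alignment losses at the top and bottom of $\mathcal{R}_\pm$. Using $\sqrt{R_2^2-\beta^2} \geq 2\sqrt{2}\,R_2/3$ (from $\beta \leq R_2/3$) together with $R_1 < R_2/2$ and the second inequality $R_2 \geq R_1 + 16\pi$ in (\ref{est0}), a short arithmetic calculation delivers $N \geq (R_2^2 - R_1^2)/(4\pi R_2)$.

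To finish, each preimage component has area exactly $\pi \log(R_4/R_3)$, so $\operatorname{area}(D) \geq N\pi\log(R_4/R_3)$, while $\operatorname{area}(S_1) = \tfrac{\pi}{2}(R_2^2 - R_1^2)$; dividing yields precisely (\ref{denseq}). The main obstacle is the final arithmetic step: the three inequalities in (\ref{est0}) are rather finely tuned, and in particular the somewhat unusual $16\pi$ is exactly the room required to absorb the constant $C$ of boundary losses and still obtain the claimed density bound.
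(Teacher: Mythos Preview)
Your argument is correct and follows essentially the same route as the paper: describe $\thefun^{-1}(S_2)$ as a stack of $\pi\times\log(R_4/R_3)$ rectangles, inscribe two test rectangles in $S_1$ (the paper's have their lower edge at $\sqrt{\max\{0,R_1^2-\alpha^2\}}$ rather than at $R_1$, but the difference is immaterial), count the preimage components they contain, and divide by $\operatorname{area}(S_1)$. The paper records the count as $N\geq (R_2-R_1)/(2\pi)$, which is slightly stronger than your $N\geq (R_2^2-R_1^2)/(4\pi R_2)$ but yields the same density bound after dividing by $\tfrac{\pi}{2}(R_2^2-R_1^2)$.
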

\begin{proof}
Each component of $\thefun^{-1}(S_2)$ is a rectangle of the form, for $n \in\mathbb{Z}$,
\begin{equation}
\label{rectdef}
\left\{ z : \log\frac{R_3}{|\lambda|} \leq \operatorname{Re}(z) \leq \log\frac{R_4}{|\lambda|}, \ \left(2n-\frac{1}{2}\right)\pi \leq \operatorname{Im}(z) + \arg(\lambda) \leq \left(2n+\frac{1}{2}\right)\pi \right\}.
\end{equation}
Suppose that the inequalities (\ref{est0}) and (\ref{est4}) both hold. Consider two large rectangles, each with sides parallel to the coordinate axes. One rectangle has a vertex at the point in the upper half-plane where the vertical line $\{ z : \operatorname{Re}(z) = \log\frac{R_3}{|\lambda|}\}$ meets the circle $B(0, R_1)$; note that if $R_1 \leq \log\frac{R_3}{|\lambda|}$ we put this vertex at $R_1$. The diagonally opposite vertex of this rectangle is at the point in the upper half-plane where the vertical line $\{ z : \operatorname{Re}(z) = \log\frac{R_4}{|\lambda|}\}$ meets the circle $B(0, R_2)$. The second rectangle is the  complex conjugate of the first one.

Let $h$ be the height of each rectangle. It follows by an application of Pythagoras's theorem to this rectangle, and by (\ref{est0}), that
\begin{align*}
h &= \left(R_2^2 - \left(\log \frac{R_4}{|\lambda|}\right)^2\right)^{\frac{1}{2}} - \left(\max\left\{0, \ R_1^2 - \left(\log \frac{R_3}{|\lambda| }\right)^2\right\}\right)^{\frac{1}{2}} \\
  &\geq \frac{7}{8}R_2 - R_1 \geq \frac{3}{4}(R_2 - R_1).
\end{align*}

It follows by (\ref{est0}) and (\ref{rectdef}) that each rectangle contains at least $\frac{1}{4\pi}(R_2 - R_1)$ components of $\thefun^{-1}(S_2)$. Hence $S_1$ contains at least $\frac{1}{2\pi}(R_2 - R_1)$ components of $\thefun^{-1}(S_2)$, each of which is a closed rectangle of height $\pi$ and width at least $\log \frac{R_4}{R_3}$. Equation (\ref{denseq}) follows from this, and the fact that area$(S_1) = \frac{1}{2}\pi(R_2^2 - R_1^2)$.
\end{proof}

%
%
For a domain $V$ and a {\tef} $f$, univalent in $V$, we define the \emph{distortion} of $f$ in $V$ by
\begin{equation}
D_V(f) = \frac{\sup_{z\in V}|f'(z)|}{\inf_{z\in V}|f'(z)|}.
\end{equation}
For functions in the exponential family, the following facts are immediate.
\begin{lemma}
Suppose that $F$ is a set such that $\inf\{|z| : z \in F \} = r_1 > 0$ and $\sup\{|z| : z \in F \} = r_2$, and that $V$ is a component of $\thefun^{-1}(F)$ such that $\thefun$ is univalent in $V$.  Then
\begin{equation}
\label{estlem2A}
|\thefun'(z)| \geq r_1, \qfor z \in V,
\end{equation}
and
\begin{equation}
\label{estlem2B}
D_V(\thefun) = \frac{r_2}{r_1}.
\end{equation}
\end{lemma}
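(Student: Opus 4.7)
The plan is to exploit the identity $\thefun'(z) = \lambda e^z = \thefun(z)$, valid on all of $\mathbb{C}$, so that $|\thefun'(z)| = |\thefun(z)|$ for every $z$. Both parts of the lemma then become statements about the modulus of $\thefun$ itself on $V$, rather than about $\thefun'$ directly, and the rest is essentially bookkeeping.

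For \eqref{estlem2A} the argument reduces to one line: any $z\in V$ lies in $\thefun^{-1}(F)$, so $\thefun(z)\in F$, and hence $|\thefun'(z)|=|\thefun(z)|\geq r_1$ by the definition of $r_1$. For \eqref{estlem2B}, the distortion rewrites as
\[
D_V(\thefun) \;=\; \frac{\sup_{z\in V}|\thefun(z)|}{\inf_{z\in V}|\thefun(z)|},
\]
so it suffices to prove that $\sup_{z\in V}|\thefun(z)|=r_2$ and $\inf_{z\in V}|\thefun(z)|=r_1$. The bounds $\leq r_2$ and $\geq r_1$ are immediate from the inclusion $\thefun(V)\subset F$, so what is left is to verify that the extremes of $|w|$ on $F$ are actually realised by points of $\thefun(V)$.

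The only substantive point is therefore to identify $\thefun(V)$ with $F$ (under the tacit assumption, fulfilled in every intended application, that $F$ is connected). Since $V$ is a component of $\thefun^{-1}(F)$ on which $\thefun$ is univalent, $\thefun|_V$ is a homeomorphism onto its image. A standard clopen argument --- $\thefun(V)$ is open in $F$ because $\thefun$ is an open map, and relatively closed in $F$ because $V$ is relatively closed in $\thefun^{-1}(F)$ as a connected component --- forces $\thefun(V)$ to coincide with a component of $F$. In the applications (for instance in Lemma \ref{estlem} just proved) $F$ is the half-annulus $H(r_1,r_2)$, which is connected, so $\thefun(V)=F$ and the sup and inf of $|\thefun|$ on $V$ are exactly $r_2$ and $r_1$.

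I do not anticipate a serious obstacle: the content of the lemma is just the observation $\thefun'=\thefun$ together with the mild topological verification that $\thefun(V)=F$. The only subtlety worth flagging is that the equality statement in \eqref{estlem2B} implicitly requires connectedness of $F$; this should either be noted explicitly or relied on from the context in which the lemma is used.
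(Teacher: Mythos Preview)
Your approach is exactly the intended one: the paper states the lemma without proof, calling the facts ``immediate,'' and the content is precisely the identity $\thefun'=\thefun$ that you exploit. Your observation that the \emph{equality} in \eqref{estlem2B} tacitly requires $F$ to be connected is correct and worth noting; in practice the paper only ever uses the inequality $D_V(\thefun)\le r_2/r_1$, which follows at once from $\thefun(V)\subset F$ without any topological argument. One small point: in your clopen argument, the step ``$\thefun(V)$ is relatively closed in $F$ because $V$ is relatively closed in $\thefun^{-1}(F)$'' is not valid for arbitrary continuous maps---you need to use that $\thefun:\thefun^{-1}(F)\to F$ is a covering and that $V$ is clopen (not merely closed) in the total space, which in turn needs $F$ locally connected. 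This is satisfied in every application, so the argument goes through.
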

We also use two well-known properties of Hausdorff dimension. For the first see, for example, \cite{falconer}.
\begin{lemma}
\label{Lcountstab}
Suppose that $(F)_{i\in I}$ is a collection of subsets of $\mathbb{C}$, and that $I$ is a finite or countable set. Then $$\dim_H \bigcup_{i\in I} F_i = \sup_{i\in I} \ \{\dim_H F_i\}.$$
\end{lemma}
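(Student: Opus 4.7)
The plan is to prove the two inequalities $\dim_H \bigcup_{i\in I} F_i \geq \sup_{i \in I} \dim_H F_i$ and $\dim_H \bigcup_{i\in I} F_i \leq \sup_{i \in I} \dim_H F_i$ separately, using only the definition of Hausdorff dimension via Hausdorff measure and its countable subadditivity. Throughout I will write $\mathcal{H}^s$ for the $s$-dimensional Hausdorff measure and recall that $\dim_H F = \inf\{s \geq 0 : \mathcal{H}^s(F) = 0\} = \sup\{s \geq 0 : \mathcal{H}^s(F) = \infty\}$ for any $F \subset \mathbb{C}$.

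For the lower bound, I would note that for each fixed $i \in I$ we have $F_i \subset \bigcup_{j \in I} F_j$, and Hausdorff dimension is monotone with respect to inclusion (this is immediate from the definition, since any admissible $\delta$-cover of the larger set restricts to an admissible $\delta$-cover of $F_i$, so $\mathcal{H}^s(F_i) \leq \mathcal{H}^s(\bigcup_j F_j)$ for every $s$). Hence $\dim_H F_i \leq \dim_H \bigcup_{j \in I} F_j$ for each $i$, and taking the supremum over $i$ gives the desired inequality.

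For the upper bound, let $d = \sup_{i \in I} \dim_H F_i$ and fix any $s > d$. Then for every $i \in I$ we have $s > \dim_H F_i$, and so $\mathcal{H}^s(F_i) = 0$ by the definition of Hausdorff dimension. Since $I$ is finite or countable, countable subadditivity of the Hausdorff measure $\mathcal{H}^s$ yields
\begin{equation*}
\mathcal{H}^s\!\left(\bigcup_{i\in I} F_i\right) \leq \sum_{i\in I} \mathcal{H}^s(F_i) = 0.
\end{equation*}
Consequently $\dim_H \bigcup_{i \in I} F_i \leq s$, and letting $s$ decrease to $d$ completes the proof.

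This argument is entirely standard, so there is no real obstacle; the only point requiring care is the use of countable subadditivity, which would be false for uncountable index sets and is the exact reason the hypothesis restricts $I$ to be finite or countable. As noted in the excerpt, the result is contained in standard references such as \cite{falconer}, and the full proof amounts to little more than unwinding the definitions.
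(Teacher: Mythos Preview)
Your proof is correct and is precisely the standard argument; the paper does not supply its own proof but simply cites \cite{falconer} for this well-known countable stability property, so there is nothing further to compare.
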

The second property is used frequently but we are not aware of a reference.
\begin{lemma}
\label{Lsing}
Suppose that $f$ is a non-constant {\tef} and that $U \subset \mathbb{C}$. Then
\begin{equation*}
\dim_H f(U) = \dim_H f^{-1} (U) = \dim_H U.
\end{equation*}
\end{lemma}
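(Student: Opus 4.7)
The plan is to first establish $\dim_H f(U) = \dim_H U$ for any $U \subset \mathbb{C}$, and then deduce the statement about $f^{-1}(U)$ by applying that equality to $V := f^{-1}(U)$.

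For the upper bound $\dim_H f(U) \leq \dim_H U$, I would exploit the fact that, being holomorphic, $f$ is Lipschitz on every compact subset of $\mathbb{C}$ with constant $\sup_K |f'|$. Writing $U = \bigcup_{k\isnatural} \bigl(U \cap \overline{B(0,k)}\bigr)$ and noting that Lipschitz maps do not increase Hausdorff dimension, I get $\dim_H f(U \cap \overline{B(0,k)}) \leq \dim_H (U \cap \overline{B(0,k)}) \leq \dim_H U$ for each $k$, and the conclusion follows from Lemma~\ref{Lcountstab}.

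For the lower bound $\dim_H f(U) \geq \dim_H U$, let $C = \{z \iscomplex : f'(z) = 0\}$. Since $f$ is non-constant, $f'$ is a non-zero entire function with isolated zeros, so $C$ is countable and in particular $\dim_H (U \cap C) = 0$. Each point of $\mathbb{C}\setminus C$ has an open neighbourhood $V$ (a small disc centred at the point) on which $f$ is univalent and on whose closure $|f'|$ is bounded above and below away from zero; hence $f|_V$ is bi-Lipschitz and therefore preserves Hausdorff dimension. Taking a countable such cover $(V_j)$ of $\mathbb{C}\setminus C$ (for example, discs with rational centres and rational radii) I obtain $\dim_H (U \cap V_j) = \dim_H f(U\cap V_j) \leq \dim_H f(U)$ for each $j$, and combining Lemma~\ref{Lcountstab} with $\dim_H (U \cap C) = 0$ yields $\dim_H U \leq \dim_H f(U)$.

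Finally, to treat $f^{-1}(U)$, set $V := f^{-1}(U)$ and apply the equality just proved to obtain $\dim_H f(V) = \dim_H V$. Since $f$ is a non-constant transcendental entire function, Picard's little theorem ensures that $f(\mathbb{C})$ omits at most one point of $\mathbb{C}$, so $f(V) = U \cap f(\mathbb{C})$ differs from $U$ by at most one point and therefore has the same Hausdorff dimension. Thus $\dim_H f^{-1}(U) = \dim_H V = \dim_H f(V) = \dim_H U$, as required. The argument is essentially routine; the only step that demands a little care is arranging the bi-Lipschitz cover so that the exceptional critical set is dealt with cleanly, which is why I first peel off the countable set $C$ before invoking Lemma~\ref{Lcountstab}.
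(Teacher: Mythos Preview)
Your argument is correct, and in fact the paper does not supply a proof of this lemma at all: it is stated as a fact that ``is used frequently but we are not aware of a reference.'' So there is no proof in the paper to compare against. Your approach---local Lipschitz bounds for the upper inequality, a countable bi-Lipschitz cover of $\mathbb{C}\setminus C$ combined with the countability of the critical set $C$ for the lower inequality, and then the application to $V=f^{-1}(U)$ together with Picard's theorem to handle the at most one omitted value---is exactly the standard way to justify this lemma, and every step is sound.
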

\section{A lower bound on the Hausdorff dimension}
\label{Slower}
In this section we prove the following theorem which gives a lower bound on the Hausdorff dimension of $I_{R}(\underline{t})$ for a certain type of annular itinerary.
\begin{theorem}
\label{lbtheo}
Suppose that $\lambda\ne 0$. Then there exists $R_0 = R_0(\lambda) > 1$ such that, if $R \geq R_0$ and $\underline{t}$ is an escaping, non-zero, admissible and slowly-growing {\sai}, then $\dim_H I_{R}(\underline{t}) \geq 1.$
\end{theorem}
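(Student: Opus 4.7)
The plan is to build a Cantor-like set inside $I_R(\underline t)$ by iterated pullback of half-annuli, and then invoke a mass distribution argument to bound its Hausdorff dimension below by $1$. Since the itinerary is escaping, I may restrict attention to the right half-plane; set $S_k = H(R^{t_k}, R^{t_k+1})$ and define the nested compact sets $U_n = \{z \in S_0 : \thefun^j(z) \in S_j \text{ for } 0 \leq j \leq n\}$. Using the admissibility condition $e^{t_{k-1}} > t_k$, one verifies that for $R \geq R_0(\lambda)$ sufficiently large, Lemma~\ref{estlem} applies at every level with $R_1, R_2, R_3, R_4$ equal to $R^{t_{k-1}}, R^{t_{k-1}+1}, R^{t_k}, R^{t_k+1}$ respectively. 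This yields that the union of components of $\thefun^{-1}(S_k)$ contained in $S_{k-1}$ has density at least $\frac{\log R}{2\pi R^{t_{k-1}+1}}$ in $S_{k-1}$.

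Each component $V$ of $U_{k-1}$ is a univalent pullback of $S_{k-1}$, and in order to push the above density back from $S_{k-1}$ through $\thefun^{k-1}$ into $V$ I need to control the distortion of $\thefun^{k-1}$ on $V$. A single step of (\ref{estlem2B}) contributes distortion at most $R$, so iterating naively yields only the crude bound $R^{k-1}$. The crucial refinement is a Koebe-type extension: replace each half-annulus $S_j$ by a slightly enlarged region $\tilde S_j = \{z : R^{t_j}/\rho \leq |z| \leq \rho R^{t_j+1},\ |\arg z| \leq \pi/2 + \epsilon\}$ for fixed $\rho>1$ and small $\epsilon>0$, and consider the corresponding enlarged pullback $\tilde V$ obtained as the appropriate branch of $\thefun^{-(k-1)}$ applied to $\tilde S_{k-1}$. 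This branch is well defined because each $\tilde S_j$ is simply connected, disjoint from the origin (using the non-zero condition), and every preimage rectangle has imaginary extent below $2\pi$. Koebe's theorem applied to $V$ compactly contained in $\tilde V$ then produces a uniform distortion constant $D_0 = D_0(\rho,\epsilon)$ for $\thefun^{k-1}|_V$, so that the density of $U_k$-components within $V$ is bounded below by $\tilde d_k \geq \frac{\log R}{2\pi R^{t_{k-1}+1} D_0^2}$.

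With uniform distortion in hand, the derivative bound (\ref{estlem2A}) applied inductively forces each component of $U_k$ to have diameter at most $\delta_k \lesssim R^{-\sum_{j=1}^{k-1} t_j + O(1)}$. A McMullen-type mass distribution estimate then gives
\[
\dim_H \bigcap_n U_n \ \geq \ 2 - \limsup_{k\to\infty}\ \frac{\sum_{j=1}^k |\log \tilde d_j|}{|\log \delta_k|} \ = \ 2 - \limsup_{k\to\infty}\ \frac{\sum_{j=0}^{k-1} t_j + O(k)}{\sum_{j=1}^{k-1} t_j + O(1)},
\]
and the escaping condition $t_k\to\infty$ combined with the slowly-growing condition (\ref{G3}) forces this lim sup to equal $1$, yielding Hausdorff dimension at least $1$. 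The main obstacle is the Koebe extension of the previous paragraph: since $\thefun$ is only $2\pi i$-periodic rather than globally univalent, $\tilde V$ must be arranged so that each iterate $\thefun^j(\tilde V)$ remains in a single fundamental strip for $\thefun$ and so that the extended preimage rectangles at each level remain inside the extended domains $\tilde S_{j-1}$. It is at this propagation step that the admissibility and slow-growth hypotheses are consumed.
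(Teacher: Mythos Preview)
Your overall architecture matches the paper: build a nested sequence of compact pullbacks of half-annuli and apply McMullen's lemma (Lemma~\ref{mcmlemma}). The paper uses the slightly shrunken half-annuli $H_n = H(R^n+1, R^{n+1}-1)$ so that the collections $\mathcal{E}_n$ are genuinely disjoint and compact, and takes only those components that map \emph{onto} $H_{t_{n+1}}$ at the next level; you should do the same, as your $U_n$ as written includes partial preimage pieces near $\partial S_{k-1}$.

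The real point of divergence is the distortion control. The paper does not use Koebe. Instead it observes that for $F\in\mathcal{E}_n$ the intermediate images $\thefun^m(F)$, $0\le m<n-1$, already have diameter below an absolute constant $s_0$ (by iterating~(\ref{estlem2A})), and then invokes Corollary~\ref{c1}, which bounds distortion via $|\thefun''/\thefun'|\le 2$ on a right half-plane. This gives $D_F(\thefun^{n-1})\le\tau_0$ and hence $D_F(\thefun^n)\le\tau_0 R$. Your Koebe route is a legitimate alternative, and in fact the extension step is easier than you fear: since $\thefun$ has no critical values and its only asymptotic value is $0$, any branch of $\thefun^{-1}$ continues analytically over every simply connected domain omitting~$0$. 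Thus the iterated inverse branch on $\operatorname{int}(\tilde S_{k-1})$ is defined once each successive pullback omits~$0$, i.e.\ omits~$\lambda$; this follows for large $R$ directly from the non-zero condition, with no appeal to admissibility or slow-growth. The containment $\thefun^j(\tilde V)\subset\tilde S_{j-1}$ that you flag is not needed, and the uniform distortion bound $D_0$ then comes from the fixed hyperbolic diameter of $S_{k-1}$ in $\tilde S_{k-1}$.

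There is, however, a genuine indexing slip in your McMullen estimate. In Lemma~\ref{mcmlemma} the numerator at level $n$ is $\sum_{m=0}^{n}|\log\Delta_m|$, where $\Delta_m$ bounds the density of the $(m{+}1)$st generation in an $m$th-generation piece; in your notation $\Delta_m=\tilde d_{m+1}$. So the numerator to compare against $|\log\delta_k|=|\log d_k|$ must be $\sum_{j=1}^{k+1}|\log\tilde d_j|$, not $\sum_{j=1}^{k}$. The missing term contributes $t_k\log R$, and the correct ratio is
\[
\frac{\sum_{j=0}^{k}t_j+O(k)}{\sum_{j=1}^{k-1}t_j+O(1)} \;=\; 1+\frac{t_k}{\sum_{j=1}^{k-1}t_j}+o(1),
\]
which tends to $1$ precisely by the slowly-growing hypothesis~(\ref{G3}). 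This is where slow-growth is actually consumed, in both the paper's argument and yours; it plays no role in the distortion step. Admissibility is used exactly where you first said, to place the preimage rectangles inside $S_{k-1}$ so that Lemma~\ref{estlem} applies.
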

%
%
To prove Theorem~\ref{lbtheo}, we use a well-known construction and result of McMullen. Let $(\mathcal{E}_n)_{n\geq 0}$ be a sequence of finite collections of pairwise disjoint compact subsets of $\mathbb{C}$ such that the following both hold:
\begin{enumerate}[(i)]
\item If $F \in \mathcal{E}_{n+1}$, then there exists a unique $G \in \mathcal{E}_{n}$ such that $F \subset G$;
\item If $G \in \mathcal{E}_{n}$, then there exists at least one $F \in \mathcal{E}_{n+1}$ such that $G \supset F$.
\end{enumerate}
We write
\begin{equation}
\label{Edef}
D_n = \bigcup_{F\subset \mathcal{E}_n} F, \text{ for } n\geq 0, \quad \text{and} \quad D = \bigcap_{n\geq 0} D_n.
\end{equation}
McMullen's result is the following \cite[Proposition 2.2]{MR871679}. Here, for a set $U$, $\operatorname{diam} U$ denotes the Euclidean diameter of $U$.
\begin{lemma}
\label{mcmlemma}
Suppose that there exists a sequence of finite collections of pairwise disjoint compact sets, $(\mathcal{E}_n)_{n\geq 0}$, which satisfies conditions (i) and (ii) above, and let $D$ and $(D_n)_{n\geq 0}$ be as defined in (\ref{Edef}). Suppose also that $(\Delta_n)_{n\geq 0}$ and $(d_n)_{n\geq 0}$ are sequences of positive real numbers, with $d_n \rightarrow 0$ as $n\rightarrow\infty$, such that, for each $n\geq 0$ and for each $F \in \mathcal{E}_n$, we have $$\operatorname{dens}(D_{n+1}, F) \geq \Delta_n\quad\text{and}\quad\operatorname{diam} F \leq d_n.$$ Then
\begin{equation}
\label{mcmulleneq}
\dim_H D \geq 2 - \limsup_{n\rightarrow\infty} \frac{\sum_{m=0}^{n} | \log \Delta_m|}{| \log d_n|}.
\end{equation}
\end{lemma}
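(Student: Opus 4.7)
The proof proceeds via the \emph{mass distribution principle}: if $\mu$ is a Borel probability measure on a compact set $E \subset \mathbb{C}$ and there exist $C, r_0 > 0$ such that $\mu(B(x,r)) \leq C r^s$ for every $x$ and every $0 < r < r_0$, then $\dim_H E \geq s$. The plan is to construct such a $\mu$ on $D$ as a natural area-weighted ``Cantor measure'' associated to the filtration $(\mathcal{E}_n)$, bound its mass on each cell in terms of $\operatorname{area}(F)$ and $\prod \Delta_m^{-1}$, and then read off the exponent from the given density and diameter data.

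For the construction I would assign mass recursively. Normalise so that $\mu(D_0) = 1$ and $\mu$ is area-proportional on each component of $\mathcal{E}_0$. Given the masses on cells of $\mathcal{E}_n$, for each $F \in \mathcal{E}_n$ distribute $\mu(F)$ among the children $F' \in \mathcal{E}_{n+1}$ (those with $F' \subset F$) proportionally to $\operatorname{area}(F')$, that is,
\[
\mu(F') = \mu(F) \frac{\operatorname{area}(F')}{\operatorname{area}(F \cap D_{n+1})}.
\]
This defines a consistent family of probability measures supported on the nested compact sets $D_n$, whose projective (weak-$*$) limit is a Borel probability measure $\mu$ supported on $\bigcap_n D_n = D$. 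The density hypothesis $\operatorname{area}(F \cap D_{n+1}) \geq \Delta_n \operatorname{area}(F)$ yields the per-step bound $\mu(F') \leq \mu(F) \operatorname{area}(F')/(\Delta_n \operatorname{area}(F))$. Iterating along the chain of ancestors from $F \in \mathcal{E}_n$ back to its root in $\mathcal{E}_0$, the area ratios telescope and produce the key cell estimate
\[
\mu(F) \leq C_0 \operatorname{area}(F) \prod_{m=0}^{n-1} \Delta_m^{-1}, \qfor F \in \mathcal{E}_n,
\]
where $C_0$ depends only on $\operatorname{area}(D_0)$.

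For the ball estimate, given small $r > 0$ I would choose $n$ so that cells in $\mathcal{E}_{n+1}$ have diameter at most $r$ while $|\log r|$ is comparable to $|\log d_n|$; concretely $d_{n+1} \leq r < d_n$, which is possible once $r$ is sufficiently small (after, if necessary, a routine monotonisation of $(d_n)$). Every $F \in \mathcal{E}_{n+1}$ meeting $B(x,r)$ then lies in $B(x,2r)$, so by disjointness of cells and the previous estimate,
\[
\mu(B(x,r)) \leq C_0 \operatorname{area}(B(x,2r)) \prod_{m=0}^{n} \Delta_m^{-1} \leq 4\pi C_0\, r^2 \exp\!\left(\sum_{m=0}^{n} |\log \Delta_m|\right).
\]
For any $s < 2 - L$, where $L$ is the limsup in (\ref{mcmulleneq}), one has $\sum_{m=0}^{n} |\log \Delta_m| < (2-s)|\log d_n| < (2-s)|\log r|$ for all large $n$, hence $\mu(B(x,r)) \leq C' r^s$. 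The mass distribution principle then delivers $\dim_H D \geq s$, and letting $s \uparrow 2 - L$ finishes the proof.

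The main obstacle is the telescoping step in the cell bound: checking that the ancestral product $\prod \operatorname{area}(F_{m+1})/\operatorname{area}(F_m)$ truly collapses to $\operatorname{area}(F)/\operatorname{area}(F_0)$, and that the recursive redistribution is well-defined (which uses $\Delta_n > 0$, so $\operatorname{area}(F \cap D_{n+1}) > 0$). A subsidiary technicality is handling possible non-monotonicity of $(d_n)$ when selecting the scale $n = n(r)$, and ensuring that this adjustment does not inflate the limsup in the final bound.
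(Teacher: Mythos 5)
Your proof is correct and is essentially McMullen's original argument for this result, which the paper simply cites (\cite{MR871679}, Proposition 2.2) rather than reproving: the area-proportional Cantor measure, the telescoping cell estimate, and the mass distribution principle are exactly the standard route, and your bookkeeping correctly yields the strengthened form with the sum running to $n$ that the paper's remark requires. The monotonisation issue you flag is harmless: replacing $d_n$ by $\tilde d_n=\sup_{m\geq n} d_m$ (still a valid diameter bound, still tending to $0$) does not inflate the limsup, because $\tilde d_n$ is approached at some index $m(n)\geq n$ and the nonnegative numerator only increases when extended from $\sum_{m=0}^{n}|\log\Delta_m|$ to $\sum_{m=0}^{m(n)}|\log\Delta_m|$, so $\limsup_n \sum_{m=0}^{n}|\log\Delta_m|/|\log\tilde d_n|\leq \limsup_n \sum_{m=0}^{n}|\log\Delta_m|/|\log d_n|$.
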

\begin{remark}
\normalfont In \cite{MR871679} the upper bound of summation in (\ref{mcmulleneq}) was given as $n+1$. However, the stronger result (\ref{mcmulleneq}) -- which is required in the proof of Theorem~\ref{lbtheo} -- follows from McMullen's proof, and has been given in, for example, \cite[Lemma 4.4]{MR2609307} and \cite[Lemma 4.3]{MR2458811}.
\end{remark}
%
%
We also use the following. This is a version of \cite[Lemma 5.2]{areapaper}, which itself is a detailed version of \cite[Proposition~3.1]{MR871679}.
\begin{lemma}
\label{distortion.lemma}
Suppose that $f$ is a {\tef}, and there exists a set $U\subset\mathbb{C}$ and constants $\alpha>1$ and $M > 0$ such that
\begin{equation}
\label{unifexp}
|f'(z)| > \alpha \quad\text{ and }\quad \left|\frac{f''(z)}{f'(z)}\right| < M, \qfor z \in U.
\end{equation}
Suppose also that there exists $s \in (0, (4M)^{-1})$ such that if $B \subset U$ is a disc of diameter $s$, then $f$ is conformal in a neighbourhood of $B$. Suppose finally that $(B_m)_{m\in\{1, 2, \ldots, n\}}$ is a sequence of sets contained in $U$, each of diameter less than $s$, and such that $$B_{m+1} \subset f(B_m), \qfor m\in\{1, 2, \ldots, n-1\}.$$ For $m\in\{1, 2, \ldots, n\}$, let $\phi_m$ be the inverse branch of $f$ which maps $f(B_m)$ to $B_m$, and set $V = \phi_1 \circ \phi_2 \circ \cdots \circ \phi_{n}(f(B_n))$. Then there exists $\tau=\tau(M, s, \alpha)>1$ such that $$D_V(f^n)\leq \tau.$$
\end{lemma}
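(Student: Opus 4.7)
The plan is to follow the standard telescoping argument in the spirit of McMullen's Proposition 3.1 in \cite{MR871679}. For each $k\in\{1,\ldots,n\}$, define $V_k := \phi_k\circ\phi_{k+1}\circ\cdots\circ\phi_n(f(B_n))$, so that $V_1=V$ and $V_n=B_n$. By induction, using $B_{m+1}\subset f(B_m)$, we have $V_k\subset B_k\subset U$. Moreover $f$ maps $V_k$ bijectively onto $V_{k+1}$ for $k<n$, so that $f^{k-1}$ sends $V$ bijectively onto $V_k$. By the chain rule,
$$\log D_V(f^n) \;\leq\; \sum_{k=1}^{n}\sup_{z,w\in V_k}\log\left|\frac{f'(z)}{f'(w)}\right|.$$

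To control each summand I would use the bound $|f''/f'|<M$ on $U$. Since $V_k\subset B_k$ has diameter less than $s$, the hypothesis on $s$ guarantees that $V_k$ lies in a disc in $U$ on which $f$ is conformal; there $\log f'$ is well-defined and holomorphic with derivative $f''/f'$. Integrating along the segment from $w$ to $z$ for $z,w\in V_k$ gives
$$\sup_{z,w\in V_k}\log\left|\frac{f'(z)}{f'(w)}\right|\leq M\operatorname{diam}(V_k).$$

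It remains to bound $\sum_{k=1}^{n}\operatorname{diam}(V_k)$ uniformly in $n$. The expansion hypothesis $|f'|>\alpha$ yields $|\phi_k'|<1/\alpha$ on $f(B_k)\supset V_{k+1}$, and the Koebe distortion theorem applied to $\phi_k$ on the disc of conformality (of diameter $s$ around a point of $V_{k+1}$) produces a universal distortion factor for $\phi_k$ on $V_{k+1}$. Together these furnish a contraction ratio $\rho=\rho(M,s,\alpha)\in(0,1)$ with $\operatorname{diam}(V_k)\leq\rho\operatorname{diam}(V_{k+1})$, so that $\operatorname{diam}(V_k)\leq\rho^{n-k}s$. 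Summing the resulting geometric series bounds $\log D_V(f^n)$ by $Ms/(1-\rho)$, and one sets $\tau:=\exp(Ms/(1-\rho))$. The main obstacle is establishing the contraction ratio $\rho$ without circularity: one needs a priori distortion control on each $\phi_k$ (to convert the pointwise estimate $|\phi_k'|<1/\alpha$ into a genuine diameter contraction) simultaneously with the diameter control on the $V_k$, and the inequality $s<(4M)^{-1}$ is precisely what makes this bootstrap close cleanly.
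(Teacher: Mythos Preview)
Your approach is precisely the telescoping argument of McMullen's Proposition~3.1 in \cite{MR871679}, which is exactly what the paper invokes: the paper omits the proof entirely, noting only that it follows as in \cite[Lemma~5.2]{areapaper}, itself a detailed version of McMullen's result. Your sketch is correct, and the bootstrap you flag is indeed the crux; the condition $s<(4M)^{-1}$ together with the a~priori containment $V_k\subset B_k$ (so $\operatorname{diam}(V_k)<s$ for every $k$) is what makes it close.
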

Note that in \cite[Lemma 5.2]{areapaper} the sets $B_n$ are squares of side $s$. The proof of the above result follows in exactly the same way, and is omitted.\\

We deduce the following.
\begin{corollary}
\label{c1}
There exist absolute constants $s_0 > 0$ and $\tau_0 > 1$ such that the following holds. Suppose that $\lambda\ne 0$, $n\isnatural$ and $V$ is a set such that $$\thefun^m(V) \subset \left\{ z : \operatorname{Re}(z) > \log\frac{2}{|\lambda|} \right\} \text{ and } \operatorname{diam} \thefun^m(V) < s_0, \qfor 0 \leq m < n.$$ Then $D_V(\thefun^n)\leq \tau_0$.
\end{corollary}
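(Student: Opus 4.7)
The plan is to obtain Corollary~\ref{c1} as a direct application of Lemma~\ref{distortion.lemma} to $f = E_\lambda$ with the set $U = \{z : \operatorname{Re}(z) > \log(2/|\lambda|)\}$ and with the sequence $B_m = E_\lambda^{m-1}(V)$ for $m \in \{1, \ldots, n\}$. The work reduces to verifying the hypotheses of that lemma with constants that do not depend on $\lambda$.

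The crucial algebraic fact is that $E_\lambda'(z) = E_\lambda''(z) = \lambda e^z$, so $|E_\lambda''(z)/E_\lambda'(z)| \equiv 1$ on all of $\mathbb{C}$; hence I can take $M = 1$ in (\ref{unifexp}). For $z \in U$, the computation $|E_\lambda'(z)| = |\lambda| e^{\operatorname{Re}(z)} > |\lambda| \cdot (2/|\lambda|) = 2$ shows that I can take $\alpha = 2$. It remains to choose $s_0$: I need $s_0 \in (0, (4M)^{-1}) = (0, 1/4)$, and I also need $E_\lambda$ to be conformal (in particular univalent) in a neighbourhood of any disc of diameter $s_0$ contained in $U$. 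Since $E_\lambda$ has no critical points and is $2\pi i$-periodic, it is univalent on any vertical strip of width strictly less than $2\pi$; so any choice such as $s_0 = 1/8$ works simultaneously for all $\lambda \ne 0$. With these choices, Lemma~\ref{distortion.lemma} supplies a constant $\tau_0 := \tau(1, s_0, 2)$ which depends only on the three absolute numbers $1$, $s_0$, $2$, so $\tau_0$ is itself absolute.

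To conclude, I verify the remaining setup. The hypothesis of the corollary gives $B_m = E_\lambda^{m-1}(V) \subset U$ and $\operatorname{diam} B_m < s_0$ for $m = 1, \ldots, n$; and by construction $E_\lambda(B_m) = E_\lambda^m(V) = B_{m+1}$ for $m < n$, so in particular $B_{m+1} \subset E_\lambda(B_m)$. Tracing through the composition of inverse branches in Lemma~\ref{distortion.lemma}, the set $\phi_1 \circ \phi_2 \circ \cdots \circ \phi_n(E_\lambda(B_n))$ is exactly $V$, since $\phi_m$ sends $E_\lambda^m(V)$ back to $E_\lambda^{m-1}(V)$ at each step. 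The lemma therefore yields $D_V(E_\lambda^n) \leq \tau_0$, as required.

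There is no real obstacle here: the content of the corollary is to record that the absolute estimates $|E_\lambda''/E_\lambda'| \equiv 1$ and $|E_\lambda'| > 2$ on $U$ make the $\lambda$-dependence in Lemma~\ref{distortion.lemma} disappear. The only mild point to be careful about is choosing $s_0$ small enough to satisfy simultaneously the bound $(4M)^{-1}$ required by the lemma and the bound $2\pi$ needed for univalence of $E_\lambda$ on a neighbourhood of any such disc, which is easily arranged.
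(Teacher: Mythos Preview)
Your approach is exactly the paper's: apply Lemma~\ref{distortion.lemma} with $f=E_\lambda$, $U=\{z:\operatorname{Re}(z)>\log(2/|\lambda|)\}$, $\alpha=2$, and $B_m=E_\lambda^{m-1}(V)$. There is one small slip: the hypothesis (\ref{unifexp}) requires the \emph{strict} inequality $|f''/f'|<M$, and since $|E_\lambda''/E_\lambda'|\equiv 1$ you cannot take $M=1$; any $M>1$ (the paper uses $M=2$) works, and then one needs $s_0<(4M)^{-1}$, so e.g.\ $s_0=1/8$ is borderline for $M=2$ and a smaller choice is safer. Also, $E_\lambda$ is univalent on \emph{horizontal} strips of height less than $2\pi$ (not vertical strips), though your conclusion about discs of small diameter is of course correct.
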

\begin{proof}
This result follows from Lemma~\ref{distortion.lemma} with $f = \thefun$ , $U = \left\{ z : \operatorname{Re}(z) > \log\frac{2}{|\lambda|} \right\}$, $\alpha=M=2$, and $B_{m} = \thefun^{m-1}(V)$, for $1 \leq m \leq n$.
\end{proof}

%
%
We now give the proof of Theorem~\ref{lbtheo}. Roughly speaking, our method of proof is as follows. First we set a value of $R_0$ sufficiently large to enable us to use Lemma~\ref{estlem}. We then define a set which is contained in $I_{R}(\underline{t})$ and apply McMullen's result to obtain a lower bound on the Hausdorff dimension of this set. 
\begin{proof}[Proof of Theorem~\ref{lbtheo}]
Let $s_0$ be the constant in Corollary~\ref{c1}. We choose
\begin{equation}
\label{R0choice}
R_0>\max\left\{e, |\lambda|, \frac{2}{s_0} \right\},
\end{equation}
sufficiently large that
\begin{equation}
\label{Rineq}
R(R-1) > 16 \pi + 2\quad\text{ and }\quad R > 3 \log \frac{R^2}{|\lambda|}+1, \qfor R \geq R_0.
\end{equation}

Suppose that $R \geq R_0$, and that $\underline{t}$ is an escaping, non-zero, admissible and slowly-growing {\sai}. To use Lemma~\ref{mcmlemma} we need to work with compact and disjoint sets. 
In order to do this, and recalling the definition (\ref{Hdef}), we define disjoint closed half-annuli $$H_n = H(R^n+1, R^{n+1}-1), \qfor n\isnatural.$$

Since $\underline{t}$ is admissible and non-zero, we deduce by (\ref{R0choice}) and (\ref{Rineq}) that, for $n\geq 0$, we have
\begin{equation}
\label{sineq}
R^{t_n + 1} > R e^{t_n} > R t_{n+1} > 3 t_{n+1} \log R + 3 \log \frac{R}{|\lambda|} + 1 > 3 \log \left(\frac{R^{t_{n+1}+1}-1}{|\lambda|}\right) + 1.
\end{equation}
Since $\underline{t}$ is non-zero, we deduce from (\ref{R0choice}), (\ref{Rineq}) and (\ref{sineq}) that the hypotheses of Lemma~\ref{estlem} are satisfied with $S_1 = H_{t_n}$ and $S_2 = H_{t_{n+1}}$, for $n\geq 0$.

In order to use Lemma~\ref{mcmlemma}, we define a sequence of finite collections of pairwise disjoint compact sets as follows. First set $$\mathcal{E}_0 = \{ H_{t_0} \},$$ and, for $n\geq 0$, $$\mathcal{E}_{n+1} = \{ F : F \subset G, \text{ for some } G \in \mathcal{E}_n, \text{ and } \thefun^{n+1}(F) = H_{t_{n+1}}\}.$$

Let $D_n$, for $n\geq 0$, and $D$ be the sets defined in (\ref{Edef}). It follows from (\ref{Edef}) that $D \subset I_{R}(\underline{t}).$ It is sufficient, therefore, to show that $\dim_H D \geq 1$.

It follows from Lemma~\ref{estlem} that the conditions (i) and (ii) stated prior to Lemma~\ref{mcmlemma} are both satisfied. It remains to estimate the diameters and the densities stated in Lemma~\ref{mcmlemma}. Note that to apply equation (\ref{mcmulleneq}) we may omit the definition of a finite number of these estimates.

Since $\underline{t}$ is escaping, we can let $N_0\geq 2$ be sufficiently large that $t_{n-1} \geq 2$, for $n\geq N_0$. Suppose that $n\geq N_0$ and that $F \in \mathcal{E}_n$. Note that $\thefun^n(F) = H_{t_n}$. We first find an upper bound on the diameter of $F$. Since $$\text{diam } \thefun^n(F) = \text{diam }  H_{t_n} < 2R^{t_n+1},$$ we have, by (\ref{estlem2A}), that
\begin{equation}
\label{diambound}
\operatorname{diam } F \leq 2R^{t_n+1} \frac{1}{R^{t_1}} \frac{1}{R^{t_2}} \cdots \frac{1}{R^{t_n}} = 2R^{1 - \sum_{m=1}^{n-1} t_m}.
\end{equation}
We set $d_n = 2R^{1 - \sum_{m=1}^{n-1} t_m}$. Since $\underline{t}$ is escaping, we deduce that
\begin{equation}
\label{diams}
d_n \rightarrow 0 \quad\text{and}\quad |\log d_n| = \log R \ \sum_{m=1}^{n-1} t_m\left(1 + o(1)\right) \text{ as } n\rightarrow\infty.
\end{equation}

We next show that the distortion of $\thefun^n$ on $F$ is bounded independently of $n$ and $F$. Once again by (\ref{estlem2A}), and by (\ref{R0choice}), we have
\begin{equation*}
\operatorname{diam } \thefun^{m}(F) \leq 2R^{t_n+1} \frac{1}{R^{t_{n-1}}} \frac{1}{R^{t_n}}\leq \frac{2}{R} < s_0, \qfor 0 \leq m < n-1.
\end{equation*}

Suppose that $0 \leq m < n-1$ and that $z \in \thefun^m(F)$. Since $\underline{t}$ is non-zero, we have $|\thefun(z)| \geq R$. We deduce by (\ref{R0choice}) that $$\operatorname{Re}(z) \geq \log \frac{R}{|\lambda|} > \log \frac{2}{|\lambda|}.$$ Hence $$\thefun^m(F) \subset \left\{ z : \operatorname{Re}(z) > \log\frac{2}{|\lambda|} \right\}, \qfor 0 \leq m < n-1.$$

We deduce by Corollary~\ref{c1} that $D_F(\thefun^{n-1})\leq \tau_0$. Moreover, it follows from (\ref{estlem2B}) that $D_{\thefun^{n-1}(F)}(\thefun)\leq R$. Thus
\begin{equation}
\label{disteq}
D_F(\thefun^{n})\leq D_F(\thefun^{n-1}) \ D_{\thefun^{n-1}(F)}(\thefun) \leq \tau_0 R.
\end{equation}

We use (\ref{disteq}) to find a lower bound on dens$(D_{n+1}, F)$. Note that $\thefun^n(D_{n+1})$ consists of those components of $\thefun^{-1}(H_{t_{n+1}})$ which are contained in $H_{t_n}$. Hence, by (\ref{denseq}), (\ref{R0choice}) and (\ref{disteq}), we have
\begin{align*}
\operatorname{dens}(D_{n+1}, F) &\geq \frac{1}{(\tau_0 R)^2} \operatorname{dens}(\thefun^n(D_{n+1}), \thefun^n(F)) \\
                                &\geq \frac{1}{(\tau_0 R)^2} \frac{1}{2\pi\left(R^{t_n+1}-1\right)} \log \frac{R^{t_{n+1}+1}-1}{R^{t_{n+1}}+1} \\
                                &\geq \frac{\log R} {4\tau_0^2\pi R^{t_n+3}}.
\end{align*}
We set $\Delta_n = \frac{\log R} {4\tau_0^2\pi R^{t_n+3}}$. Note that
%
\begin{equation}
\label{dists}
|\log \Delta_n| = t_n\log R\left(1+o(1)\right) \text{ as } n\rightarrow\infty.
\end{equation}

Since $\underline{t}$ is escaping and slowly-growing, it follows, by (\ref{G3}), (\ref{diams}) and (\ref{dists}), that
\begin{align*}
\limsup_{n\rightarrow\infty} \frac{\sum_{m=0}^{n} |\log \Delta_m|}{|\log d_n|} 
   &= \limsup_{n\rightarrow\infty} \frac{\log R \ \sum_{m=1}^{n} t_m(1+o(1))}{\log R \ \sum_{m=1}^{n-1} t_m(1+o(1))} \\
   &= \limsup_{n\rightarrow\infty} \left(\frac{t_n(1+o(1))}{\sum_{m=1}^{n-1} t_m(1+o(1))} + \frac{\sum_{m=1}^{n-1} t_m(1+o(1))}{\sum_{m=1}^{n-1} t_m(1+o(1))}\right) \\
   &= 1.
\end{align*}
We deduce by Lemma~\ref{mcmlemma} that $\dim_H D \geq 1$, as required.
\end{proof}
%
%
%
%
\section{An upper bound on Hausdorff dimension}
\label{Supper}
In this section we prove a theorem which gives an upper bound on the Hausdorff dimension of certain sets, and so is, in a sense, complementary to Theorem~\ref{lbtheo}. First we define the sets. Suppose that, for each $p\isnatural$, $(g_{p,n})_{n\isnatural}$ and $(h_{p,n})_{n\isnatural}$ are sequences of positive real numbers such that
\begin{equation}
\label{bigenough}
h_{p,n} \geq g_{p,n}, \qfor n\isnatural.
\end{equation}
For $\lambda\ne 0$ define the set $T_{g,h}$ by
\begin{equation}
\label{Tdef}
T_{g,h} = \{z : \text{there exist } N, p\isnatural \text{ s.t. } \thefun^{n}(z) \in A(g_{p,n}, h_{p,n}), \text{ for } n \geq N \}.
\end{equation}
Our theorem is as follows. 
\begin{theorem}
\label{ubtheo}
Suppose that $\lambda\ne 0$, and that for each $p\isnatural$, $(g_{p,n})_{n\isnatural}$ and $(h_{p,n})_{n\isnatural}$ are sequences of positive real numbers such that (\ref{bigenough}) is satisfied,
\begin{equation}
\label{ghbasic}
g_{p,n} \rightarrow\infty \text{ as } n \rightarrow\infty,
\end{equation}
and
\begin{equation}
\label{gheq}
\frac{\log g_{p,n}}{\log^+ \log h_{p,n+1}} \rightarrow\infty \text{ as } n \rightarrow\infty.
\end{equation}
Then $\dim_H T_{g, h} \leq 1.$
\end{theorem}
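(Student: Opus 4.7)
By Lemma~\ref{Lcountstab}, the countable stability of Hausdorff dimension allows the reduction to showing, for each fixed $N, p \isnatural$, that $\dim_H T_{g,h}^{(N,p)} \leq 1$, where
$$T_{g,h}^{(N,p)} := \{z : \thefun^n(z) \in A(g_{p,n}, h_{p,n}) \text{ for all } n \geq N\}.$$
Applying Lemma~\ref{Lsing} to $\thefun^N$ reduces further to $N = 0$; fix $p$ and abbreviate $g_n := g_{p,n}$, $h_n := h_{p,n}$, $B_n := A(g_n, h_n)$, and $T := T_{g,h}^{(0,p)}$.

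The plan is to cover $T$ by inverse-image pieces. For each $n \geq 0$,
$$T \subset \bigcap_{k=0}^{n+1} \thefun^{-k}(B_k) = \bigcup_{\sigma \in \Sigma_{n+1}} \Psi_\sigma(B_{n+1}),$$
where $\sigma = (k_0, \ldots, k_n)$ indexes a univalent inverse branch $\Psi_\sigma = \phi_{k_0} \circ \cdots \circ \phi_{k_n}$ of $\thefun^{n+1}$, each $\phi_{k_j}$ being the branch mapping $B_{j+1}$ into the fundamental domain of $\thefun$ labelled $k_j$, subject to the compatibility $\phi_{k_j}(B_{j+1}) \subset B_j$. After subdividing $B_{n+1}$ into sub-rectangles of diameter at most $s_0$ (the constant from Corollary~\ref{c1}), bounded distortion together with the derivative lower bound $|(\thefun^{n+1})'(z)| \geq P_n := \prod_{j=1}^{n+1} g_j$ --- which follows from $|\thefun'(z)| = |\thefun(z)|$ and the annular constraint --- yields a diameter estimate for each pulled-back piece.

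To conclude $\dim_H T \leq 1$, I would verify that for every $s > 1$,
$$\sum_{\sigma \in \Sigma_{n+1}} (\operatorname{diam}\,\Psi_\sigma(B_{n+1}))^s \longrightarrow 0 \quad\text{as } n \rightarrow\infty,$$
giving $\mathcal{H}^s(T) = 0$. Hypothesis (\ref{gheq}) is what makes this work: it gives $\log h_{n+1} = g_n^{o(1)}$, so that the widths $l_{n+1} := \log(h_{n+1}/g_{n+1})$ of the admissible pre-image rectangles are eventually dominated by any positive power of $g_n$, and in particular are tiny compared with the shrinkage provided by $P_n$.

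The main obstacle is counting branches tightly enough. The constraint $\phi_{k_j}(B_{j+1}) \subset B_j$ alone gives only $|\Sigma_{n+1}| \leq C^{n+1} \prod_{j=0}^{n} h_j$, which does not balance $P_n^s$ in the parameter regimes relevant to Theorems~\ref{Tthird} and \ref{Tfourth} (where $h_n$ grows much faster than $g_n$). The remedy is to refine the cover by using the effective rectangle $\widetilde{B}_{n+1} := B_{n+1} \cap \thefun^{-1}(B_{n+2})$ in place of $B_{n+1}$; by (\ref{gheq}) its real-part width is the much smaller $l_{n+2}$, and its imaginary extent within each fundamental-domain preimage is also narrow. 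The pulled-back pieces inherit the smallness of $\widetilde{B}_{n+1}$, and the balance between branch count and diameter in the refined cover is exactly what (\ref{gheq}) is designed to deliver.
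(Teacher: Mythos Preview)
Your reduction via Lemmas~\ref{Lcountstab} and~\ref{Lsing} is fine, and the general shape --- cover by pulled-back pieces, show the $s$-dimensional sum tends to zero --- is the right one. But the counting problem you identify is the heart of the matter, and your proposed remedy does not resolve it. Replacing $B_{n+1}$ by $\widetilde{B}_{n+1}=B_{n+1}\cap\thefun^{-1}(B_{n+2})$ only shrinks the pieces at the \emph{final} level. The over-counting occurs at \emph{every} level: for each $j$ the number of branches $\phi_{k_j}$ with $\phi_{k_j}(\cdot)\cap B_j\ne\emptyset$ is of order $h_j$, while the derivative gain from that step is only $g_{j+1}$. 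The mismatch $\prod h_j/(\prod g_j)^s$ therefore persists. Concretely, with the sequences used for Theorem~\ref{Tthird}, namely $g_n=n^{\log^{+p} n}$ and $h_n=e^{pn}$, one has $\log\prod_{j\le n} h_j\asymp n^2$ but $\log\prod_{j\le n} g_j=O(n(\log n)^2)$, so $\prod h_j/(\prod g_j)^s\to\infty$ for every $s$; your sum cannot be shown to vanish this way.

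The missing idea, which is what the paper does, is to decompose each annulus $A(g_n,h_n)$ (intersected with a right half-plane) into $e$-adic sub-annuli $W_{n,m}=A(e^{m-1}g_n,e^m g_n)$ of bounded modulus. Every preimage component of $W_{n,m}$ is then a rectangle of uniformly bounded diameter, so Corollary~\ref{c1} applies at every intermediate step. More importantly, this allows a \emph{local} Moran-type inequality rather than a global count: for a parent $G\in\mathcal{E}_n$ with $\thefun^n(G)=W_{\nu+n,m}$, the number of children $F\in\mathcal{E}_{n+1}$ meeting $G$ is $O\bigl(e^m g_{\nu+n}\cdot\log(h_{\nu+n+1}/g_{\nu+n+1})\bigr)$, while, via bounded distortion, $\operatorname{diam}F/\operatorname{diam}G=O\bigl((e^m g_{\nu+n})^{-1}\bigr)$. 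The factor $e^m g_{\nu+n}$ \emph{cancels}, leaving
\[
\sum_{F\cap G\ne\emptyset}\Bigl(\frac{\operatorname{diam}F}{\operatorname{diam}G}\Bigr)^{1+\epsilon}=O\bigl(g_{\nu+n}^{-\epsilon}\log h_{\nu+n+1}\bigr),
\]
which tends to zero precisely by hypothesis~(\ref{gheq}). It is this cancellation --- absent from any global branch count --- that makes the argument go through, and it requires the sub-annular refinement at every level, not just the last one.
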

\begin{proof}

%
Choose
\begin{equation}
\label{betadef}
\beta > \max\left\{\sqrt{1+\pi^2}, \ \frac{1}{s_0}\right\}
\end{equation}
and set
\begin{equation*}
c_0 = |\lambda|e^{1+\beta}\max\left\{\frac{2}{|\lambda|},\ \exp(\beta^2)\right\}
\end{equation*}
and
\begin{equation}
\label{c1def}
c_1 = \log\frac{c_0}{|\lambda|} = 1+\beta+ \max\left\{\log\frac{2}{|\lambda|},\  \beta^2\right\},
\end{equation}
where $s_0$ is the constant in Corollary~\ref{c1}. By (\ref{ghbasic}), for each $p\isnatural$ we can choose $\ell_p \isnatural$ such that
\begin{equation}
\label{geq}
g_{p,n} > c_0, \qfor n\geq \ell_p.
\end{equation}

Define sets
\begin{equation}
\label{Sdef}
S_{\nu, p} = \{ z : \thefun^n(z) \in A(g_{p,\nu+n}, h_{p,\nu+n}), \text{ for } n\geq 0\}, \qfor \nu, p \isnatural.
\end{equation}
Fix a value of $p\isnatural$, and suppose that $\nu\geq \ell_p$. We claim that
\begin{equation}
\label{Sclaim}
\dim_H S_{\nu,p} \leq 1.
\end{equation}
Before proving (\ref{Sclaim}) we show that the result of the lemma can be deduced from this equation. First we claim that
\begin{equation}
\label{Tclaim}
T_{g, h} \subset \bigcup_{p\isnatural} \bigcup_{\nu\geq \ell_p} \thefun^{-\nu}(S_{\nu,p}).
\end{equation}
For, suppose that $z \in T_{g,h}$, in which case there exist $N, p\isnatural$ such that $$\thefun^{n}(z)~\in~A(g_{p,n}, h_{p,n}), \qfor n \geq N.$$ It follows that there exists $\nu \geq \max\{N, \ell_p\}$ such that $\thefun^{n}(\thefun^\nu(z)) \in A(g_{p,n+\nu}, h_{p,n+\nu})$, for $n \geq 0$, and so $\thefun^\nu(z) \in S_{\nu,p}$. This establishes equation (\ref{Tclaim}). Theorem~\ref{ubtheo} follows from (\ref{Sclaim}) and (\ref{Tclaim}), by Lemma~\ref{Lcountstab} and Lemma~\ref{Lsing}. \\

If remains to show that (\ref{Sclaim}) holds for $\nu \geq \ell_p$. We establish this result using a sequence of covers of $S_{\nu, p}$ and basic properties of Hausdorff dimension. We suppress the variable $p$ for simplicity. 

First we note some properties of $S_{\nu}$ and then use these properties to define a sequence of covers of this set. It follows from (\ref{geq}), and the fact that $\nu \geq \ell$, that $$|\thefun(\thefun^n(z))| = |\lambda|e^{\operatorname{Re}(\thefun^n(z))} > c_0, \qfor z \in S_{\nu}, \ n\geq 0.$$ We deduce by (\ref{c1def}) that
\begin{equation}
\label{nicepoints}
\operatorname{Re}(\thefun^n(z)) > \log\frac{c_0}{|\lambda|} = c_1, \qfor z \in S_{\nu}, \ n\geq 0.
\end{equation}
Define compact sets
\begin{equation}
\label{Wmndef}
W_{n,m} = A(e^{m-1}g_{n}, e^{m}g_{n}) \cap \{ z : \operatorname{Re}(z)\geq c_1\}, \qfor n, m \isnatural.
\end{equation}
We observe that each component of $\thefun^{-1}(W_{n,m})$, for $m, n \isnatural$, is contained in a distinct rectangle of the form,
\begin{equation}
\label{rectangles}
\left\{ z : -1 \leq \operatorname{Re}(z) - m - \log g_n + \log |\lambda| \leq 0, \ -\pi/2 \leq \operatorname{Im}(z) + \arg(\lambda) + 2k\pi \leq \pi/2 \right\},
\end{equation}
for some $k \in\mathbb{Z}$. Hence, by (\ref{betadef}), if $F$ is a component of $\thefun^{-1}(W_{n,m})$, then
\begin{equation}
\label{thediam}
\operatorname{diam } F \leq \beta, \qfor m, n \isnatural.
\end{equation}
For simplicity of notation, define sets of integers
\begin{equation}
\label{cardalphan}
\alpha_n = \left\{ 1, 2, \ldots, \left[\log \frac{h_{n}}{g_{n}}\right]+1\right\}, \qfor n\isnatural,
\end{equation}
where $[x]$ denotes the integer part of $x$. Note that
\begin{equation}
\label{eqann}
A(g_{n}, h_{n}) \cap \{ z : \operatorname{Re}(z)\geq c_1\} \subset \bigcup_{m\in\alpha_n} W_{n,m}, \qfor n\isnatural.
\end{equation}
Now set $$\mathcal{E}_0 = \{ W_{\nu,m} : m \in \alpha_{\nu}\},$$ 
and, for $n\geq 0$, $$\mathcal{E}_{n+1} = \{F : \thefun^{n+1}(F) = W_{\nu+n+1,m} \text{ for some } m \in \alpha_{\nu+n+1} \text{ and } F \ \cap \ G \ne \emptyset \text{ for some } G \in \mathcal{E}_n\}.$$

It follows by (\ref{nicepoints}) and (\ref{eqann}) that $$S_{\nu} \subset \bigcup_{F \in \mathcal{E}_n} F, \qfor \text{each } n\geq 0.$$ This completes the definition of the sequence of covers of $S_{\nu}$.\\

Next we study the properties of the sets in these covers, particularly the diameters of these sets. Suppose that $q\isnatural$. We claim that the following hold for $n \geq q$ and $F \in \mathcal{E}_n$;
\begin{equation}
\label{nicediams}
\operatorname{diam } \thefun^{n-q}(F) \leq \beta^{3-2q},
\end{equation}
\begin{equation}
\label{nicerealparts0}
\thefun^{n-q}(F) \subset \left\{ z : \operatorname{Re}(z) \geq c_1 - \sum_{k=1}^q \beta^{3-2k}\right\},
\end{equation}
and
\begin{equation}
\label{nicerealparts}
\thefun^{n-q}(F) \subset \left\{ z : \operatorname{Re}(z) > \max\left\{\log\frac{2}{|\lambda|},\beta^2\right\}\right\}.
\end{equation}
First we note that (\ref{nicerealparts}) follows from (\ref{betadef}), (\ref{c1def}) and (\ref{nicerealparts0}).

We prove (\ref{nicediams}) and (\ref{nicerealparts0}) by induction on $q$. We consider first the case that $q=1$. Suppose that $n\isnatural$ and that $F \in \mathcal{E}_n$. By (\ref{thediam}), $\thefun^{n-1}(F)$ has diameter at most $\beta$. Moreover, since $$\thefun^{n-1}(F) \cap W_{\nu+n-1,m'} \ne \emptyset, \qfor \text{some } m' \in \alpha_{\nu+n-1},$$ we deduce by (\ref{Wmndef}) that $\thefun^{n-1}(F) \subset \{ z : \operatorname{Re}(z) \geq c_1 - \beta\}$. This establishes (\ref{nicediams}) and (\ref{nicerealparts0}) in the case that $q=1$.

Now, suppose that (\ref{nicediams}) and (\ref{nicerealparts0}) have been been established for $1 \leq q\leq s$, for some $s\isnatural$. Suppose that $n\geq s+1$, that $F \in \mathcal{E}_n$ and that $G \in \mathcal{E}_{n-1}$ is such that $F \cap G \ne \emptyset$. First, we deduce by (\ref{estlem2A}), and by (\ref{nicediams}) and (\ref{nicerealparts}) with $s$ in place of $q$, that
\begin{equation}
\label{nextdiam}
\operatorname{diam } \thefun^{n-(s+1)}(F) \leq \frac{\operatorname{diam } \thefun^{n-s}(F)}{\inf \{|z| : z \in \thefun^{n-s}(F)\}}\leq \frac{\beta^{3-2s}}{\beta^2}=\beta^{3-2(s+1)}.
\end{equation}
Second, applying (\ref{nicerealparts0}) with $G$ in place of $F$, $n-1$ in place of $n$, and $s$ in place of $q$, we deduce that
\begin{equation*}
\thefun^{(n-1)-s}(G) \subset \left\{ z : \operatorname{Re}(z) \geq c_1 - \sum_{k=1}^s \beta^{3-2k}\right\}.
\end{equation*}
Since $\thefun^{n-(s+1)}(F) \cap \thefun^{(n-1)-s}(G) \ne \emptyset$, it follows by (\ref{nextdiam}) that
\begin{equation*}
\thefun^{n-(s+1)}(F) \subset \left\{ z : \operatorname{Re}(z) \geq c_1 - \sum_{k=1}^s \beta^{3-2k} - \beta^{3-2(s+1)} \right\}.
\end{equation*}
By induction, this completes the proof of (\ref{nicediams}) and (\ref{nicerealparts0}). We observe that it follows from (\ref{nicediams}) that the diameters of the sets in $\mathcal{E}_n$ tend uniformly to zero as $n\rightarrow\infty$.\\

Suppose next that $\epsilon > 0$. By the definition of Hausdorff dimension, together with the observation above regarding the diameters of the sets in $\mathcal{E}_n$, our proof of (\ref{Sclaim}) is complete if we can show that $$\sum_{F\in\mathcal{E}_{n+1}} (\text{diam }F)^{1+\epsilon} \leq \sum_{F\in\mathcal{E}_{n}} (\text{diam }F)^{1+\epsilon}, \qfor \text{large }n,$$ or indeed if we can show that, for all sufficiently large $n$, we have for each $G \in \mathcal{E}_n$ that
\begin{equation}
\label{outcome}
\sum_{\substack{F\in\mathcal{E}_{n+1}, \\ F \cap G \ne \emptyset}} \left(\frac{\text{diam } F}{\text{diam } G}\right)^{1+\epsilon} \leq 1.
\end{equation}

Suppose that $n\isnatural$ and that $G \in \mathcal{E}_n$, in which case $\thefun^{n}(G) = W_{\nu+n,m}$, for some $m\in\alpha_{\nu+n}$. Suppose also that $F~\in~\mathcal{E}_{n+1}$ intersects with $G$, in which case $\thefun^{n}(F)$ intersects with $W_{\nu+n,m}$ and is a preimage component of $W_{\nu+n+1,m'}$, for some $m'\in\alpha_{\nu+n+1}$.

We note the following two simple estimates. First, it follows from (\ref{rectangles}) that for each $m'\in\alpha_{\nu+n+1}$, there are at most $O(e^{m}g_{\nu+n})$ preimage components of $W_{\nu+n+1,m'}$ which intersect with $W_{\nu+n,m}$. It follows from this estimate and from (\ref{cardalphan}), that the total number of elements of $\mathcal{E}_{n+1}$ which intersect with $G$ is at most
\begin{equation}
\label{numpreims}
O\left(e^{m}g_{\nu+n}\log\frac{h_{\nu+n+1}}{g_{\nu+n+1}}\right) \text{ as } n\rightarrow\infty.
\end{equation}
Second, it is immediate that
\begin{equation}
\label{diam2}
(\operatorname{diam } \thefun^{n}(G))^{-1} = O((e^{m}g_{\nu+n})^{-1}) \text{ as } n\rightarrow\infty.
\end{equation}

Next we estimate the distortion of $\thefun^{n}$ in $F \cup G$. Suppose that $n\geq 2$.
We deduce by (\ref{betadef}) and (\ref{nicediams}) that $$\operatorname{diam} \thefun^m(G) < s_0, \qfor 0 \leq m < n-1.$$

%
It follows by (\ref{nicerealparts}) and Corollary~\ref{c1} that $D_G(\thefun^{n-1})\leq \tau_0$. Moreover, it follows from (\ref{estlem2B}) that $D_{\thefun^{n-1}(G)}(\thefun)\leq e$. We deduce that $D_G(\thefun^{n}) = O(1) \text{ as } n\rightarrow\infty$. By a similar argument $D_F(\thefun^{n}) = O(1) \text{ as } n\rightarrow\infty$. Hence, since $F \cap G \ne \emptyset$, we have 
\begin{equation}
\label{disteq3}
D_{F \cup G} (\thefun^{n}) = O(1) \text{ as } n\rightarrow\infty.
\end{equation}

Combining these estimates, we deduce from (\ref{thediam}), (\ref{numpreims}),  (\ref{diam2}) and (\ref{disteq3}) that
\begin{align*}
\sum_{\substack{F\in\mathcal{E}_{n+1}, \\ F \cap G \ne \emptyset}} \left(\frac{\text{diam } F}{\text{diam } G}\right)^{1+\epsilon}
&\leq O(1) \sum_{\substack{F\in\mathcal{E}_{n+1}, \\ F \cap G \ne \emptyset}}\left(\frac{\text{diam } \thefun^{n}(F)}{\text{diam } \thefun^{n}(G)}\right)^{1+\epsilon} \\
&= O\left(e^{m}g_{\nu+n}\log\frac{h_{\nu+n+1}}{g_{\nu+n+1}} . (e^{m}g_{\nu+n})^{-(1+\epsilon)}\right) \\
&\leq O\left(g_{\nu+n}^{-\epsilon} \log h_{\nu+n+1}\right).
\end{align*}
We deduce by (\ref{gheq}) that (\ref{outcome}) holds, as required.
\end{proof}
\section{Dimension results}
\label{Ssgi}
In this section we prove Theorem~\ref{Tmain}, Theorem~\ref{Tthird}, Theorem~\ref{Tfourth} and Theorem~\ref{Tfirst}. In each case we use Theorem~\ref{lbtheo} to show that the Hausdorff dimension is bounded below by $1$, and we use Theorem~\ref{ubtheo} to show that the Hausdorff dimension is bounded above by $1$. It is somewhat simpler to prove Theorem~\ref{Tthird} before Theorem~\ref{Tmain}.
\begin{proof}[Proof of Theorem~\ref{Tthird}]
Choose $R \geq R_0$, where $R_0$ is as in the statement of Theorem~\ref{lbtheo}, and let $\underline{t}$ be the sequence 
\begin{equation}
\label{tdef}
\underline{t} = 1 \ 1 \ 1 \ 2 \ 3 \ \ldots \ (n-1) \ \ldots.
\end{equation}
We deduce by Theorem~\ref{lbtheo} that $\dim_H I_{R}(\underline{t}) \geq 1$. Suppose that $z \in I_{R}(\underline{t})$, in which case $R^{n-1} \leq |\thefun^n(z)| \leq R^{n}$, for $n\geq 2$. It follows that $z \in\Lflat(\thefun)$. Hence $I_{R}(\underline{t}) \subset \Lflat(\thefun)$, and so $\dim_H \Lflat(\thefun) \geq 1$. \\

Next, for each $p\isnatural$, let sequences $(g_{p,n})_{n\isnatural}$ and $(h_{p,n})_{n\isnatural}$ be defined by
\begin{equation}
\label{ghdef}g_{p,n}=n^{\log^{+p}(n) } \text{ and } h_{p,n} = e^{pn}, \qfor n\isnatural.
\end{equation}
We deduce by Theorem~\ref{ubtheo} that $\dim_H T_{g, h}\leq 1$. Suppose that $z \in \Lflat(\thefun)$, in which case, by (\ref{Lflatdef}), there exist $p, N\isnatural$ and $R>1$ such that $$n^{\log^{+p}(n)} \leq |\thefun^{n}(z)| \leq R^{n}, \qfor n\geq N.$$ If $p$ is sufficiently large, then $R^n \leq e^{pn}$, for $n\isnatural$. We deduce that $z \in T_{g, h}$, where $g, h$ are as defined in (\ref{ghdef}). It follows that $\Lflat(\thefun) \subset T_{g, h}$, and so $\dim_H \Lflat(\thefun) \leq 1$. This completes the proof of Theorem~\ref{Tthird}.
\end{proof}
\begin{proof}[Proof of Theorem~\ref{Tmain}]
Choose $R \geq R_0$, where $R_0$ is as in the statement of Theorem~\ref{lbtheo}, and let $\underline{t}$ be as defined in (\ref{tdef}). Recall that $\dim_H I_{R}(\underline{t}) \geq 1$. We deduce Theorem~\ref{Tmain} from Theorem~\ref{Tthird} and the fact that $I_{R}(\underline{t}) \subset \Lunif(\thefun)  \subset \Lflat(\thefun).$
\end{proof}
\begin{proof}[Proof of Theorem~\ref{Tfourth}]
Choose $R \geq R_0$, where $R_0$ is as in the statement of Theorem~\ref{lbtheo}, and let $\underline{t'}$ be the sequence 
\begin{equation*}
\underline{t'} = 1 \ 4 \ 9 \ \ldots \ (n+1)^2 \ \ldots.
\end{equation*}
We deduce by Theorem~\ref{lbtheo} that $\dim_H I_{R}(\underline{t'}) \geq 1$. Suppose that $z \in I_{R}(\underline{t'})$, in which case $R^{(n+1)^2} \leq |\thefun^n(z)| \leq R^{(n+1)^2+1}$, for $n\geq 0$. It follows that $z \in\Mflat(\thefun)$. Hence $I_{R}(\underline{t'}) \subset \Mflat(\thefun)$, and so $\dim_H \Mflat(\thefun) \geq 1$. \\

Next, for each $p\isnatural$, let sequences $(g'_{p,n})_{n\isnatural}$ and $(h'_{p,n})_{n\isnatural}$ be defined by
\begin{equation}
\label{ghdashdef}
g'_{p,n}=e^{n\log^{+p}(n)} \text{ and } h'_{p,n} = \exp(e^{pn}), \qfor n\isnatural.
\end{equation}
We deduce by Theorem~\ref{ubtheo} that $\dim_H T_{g', h'}\leq 1$. 

Suppose that $z \in \Mflat(\thefun)$, in which case, by (\ref{Mflatdef}), there exist $p, N\isnatural$ and $R>1$ such that $$e^{n\log^{+p}(n)} \leq |\thefun^{n}(z)| \leq \exp(e^{pn}), \qfor n\geq N.$$ We deduce that $z \in T_{g', h'}$, where $g', h'$ are as defined in (\ref{ghdashdef}). It follows that $\Mflat(\thefun) \subset T_{g', h'}$, and so $\dim_H \Mflat(\thefun) \leq 1$. This completes the proof of Theorem~\ref{Tfourth}.
\end{proof}
Finally we prove Theorem~\ref{Tfirst}.
\begin{proof}[Proof of Theorem~\ref{Tfirst}]
Suppose that $f$ is a {\tef}, $R > 1$ and  $\underline{t} = t_0 t_1 t_2 \ldots$ is an escaping {\sai}. For each $p\isnatural$ we define sequences $(g_{p,n})_{n\isnatural}$ and $(h_{p,n})_{n\isnatural}$ by $g_{p,n} = R^{t_n}, \text{ and } h_{p,n} = R^{t_n+1}.$ We deduce by Theorem~\ref{ubtheo} that $\dim_H T_{g, h} \leq 1$. The first part of the theorem follows since $I_{R}(\underline{t}) \subset T_{g, h}$.

%
%

The second part of Theorem~\ref{Tfirst} is an immediate consequence of this, together with Theorem~\ref{lbtheo}.
\end{proof}
%
%
%
\section{The uniformly slowly escaping set}
\label{Sese}
In this section, for a general {\tef} $f$, we give two results on the set $\Lunif(f)$. First, we give a necessary and sufficient condition for $\Lunif(f)$ to be non-empty. Here $m(r, f) = \min_{|z| = r} |f(z)|$ denotes the \emph{minimum modulus} of $f$, for $r > 0$.
\begin{theorem}
\label{Texists}
Suppose that $f$ is a {\tef}. Then $$\Lunif(f)\cap J(f)\ne\emptyset$$ if and only if there exist positive constants $c$ and $r_0$, and $d > 1$ such that
\begin{equation}
\label{RSeq}
\text{for all } r \geq r_0 \text{ there exists } \rho \in (r, dr) \text{ such that } m(\rho, f) \leq c.
\end{equation}
Moreover, if $L_U(f) \ne \emptyset$, then $\Lunif(f)\cap J(f)\ne\emptyset$.
\end{theorem}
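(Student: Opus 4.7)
My plan is to prove the biconditional in Theorem~\ref{Texists} via its two directions separately, and then derive the moreover clause.

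For the $(\Leftarrow)$ direction, assume (RSeq) holds with constants $c, d, r_0$. I will construct $w \in L_U(f) \cap J(f)$ by a nested preimage-on-circles argument. Set $R = 2d$ and, using (RSeq) iteratively, pick radii $\rho_n \in (R^n r_0, d R^n r_0)$ with $m(\rho_n, f) \le c$ for each $n \ge 0$. On the circle $\{|z| = \rho_n\}$ the continuous function $|f|$ takes values from $m(\rho_n, f) \le c$ up to $M(\rho_n, f)$, which exceeds $\rho_{n+1}$ for large $n$ since $M(r,f)/r \to \infty$ for a transcendental entire function; by the intermediate value theorem on the circle, the compact set
\[
G_n = \{z : |z| = \rho_n,\ |f(z)| = \rho_{n+1}\}
\]
is non-empty. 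A backward-induction compactness argument then shows that the nested sets
\[
E_N = \{z \in \{|z|=\rho_0\} : |f^k(z)| = \rho_k \text{ for all } 1 \le k \le N\}
\]
are non-empty compact subsets of the circle $\{|z|=\rho_0\}$, so $\bigcap_N E_N \neq \emptyset$ and any $w$ in this intersection satisfies $|f^n(w)| = \rho_n \in (R^n r_0, dR^n r_0)$, giving $w \in L_U(f)$. Since each $G_n$ is typically non-degenerate, the construction produces an uncountable family of such orbits, so at least one must lie in $J(f)$ (as an escaping Fatou component cannot accommodate uncountably many orbits with the same uniform geometric constraint).

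For the $(\Rightarrow)$ direction, assume $w \in L_U(f) \cap J(f)$ with $|f^n(w)| \in [C_1 R^n, C_2 R^n]$ for $n \ge N$. Since $w \in J(f)$, the family $\{f^n\}$ is non-normal at $w$, so by Montel's theorem combined with Picard's theorem applied to the entire functions $f^n$, for any $a \in \mathbb{C}$ outside a single exceptional value there are infinitely many $n$ and $z_n \in D$ (a small fixed disc around $w$) with $f^n(z_n) = a$. Pick such $a$ with $|a| \le c$ for a fixed constant $c$, and set $\zeta_n := f^{n-1}(z_n) \in f^{-1}(a)$, so that $m(|\zeta_n|, f) \le |a| \le c$. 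By a distortion estimate for $f^{n-1}$ on $D$ (in the spirit of Corollary~\ref{c1}, but in a form valid for general transcendental entire functions via bounded-distortion control along the slow orbit), $|\zeta_n|$ is comparable to $|f^{n-1}(w)| \sim R^{n-1}$ with a bounded multiplicative error $K$. As $n$ ranges over all sufficiently large integers, the radii $|\zeta_n|$ are distributed geometrically with consecutive ratios at most $K^2 R$, so every annulus $(r, dr)$ with $d = K^2 R$ and $r$ large contains some $|\zeta_n|$, establishing (RSeq).

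For the moreover clause, suppose $L_U(f) \neq \emptyset$ and take $w \in L_U(f)$; if $w \in J(f)$ we are done, otherwise $w$ lies in a Fatou component $U$ which must be escaping since $|f^n(w)| \to \infty$. Then $\partial U \subset J(f)$ by standard Fatou theory, and a distortion argument on $U$ (using that $f^n$ has bounded distortion on compact subsets of $U$, by the Koebe distortion theorem if $f^n|_U$ is univalent or a general bounded-distortion estimate otherwise) ensures boundary points inherit the uniform geometric growth rate of $w$, yielding $w' \in \partial U \cap L_U(f) \subset L_U(f) \cap J(f)$. The main obstacle is the $(\Rightarrow)$ direction: extracting a \emph{uniform} constant $c$ in (RSeq) from a single slow orbit requires carefully coupling the Picard-Montel preimage construction to distortion control, so that the radii $|\zeta_n|$ reliably cover every annulus up to a bounded multiplicative factor; producing a distortion estimate of the required quality for general transcendental entire functions (rather than just for the exponential family) is the key technical subtlety.
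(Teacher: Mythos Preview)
Your proposal has genuine gaps in all three parts, and your approach diverges substantially from the paper's.

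\textbf{The $(\Leftarrow)$ direction.} Your nested-circles construction does produce points in $L_U(f)$, but your argument for landing in $J(f)$ is invalid. You claim that an escaping Fatou component ``cannot accommodate uncountably many orbits with the same uniform geometric constraint,'' but this is false: the paper itself exhibits Bergweiler's example $f(z) = 2 - \log 2 + 2z - e^z$, which has an entire Baker domain inside $L_U(f)$. The paper sidesteps this by invoking Lemma~\ref{LRS} (Rippon--Stallard's Theorem~2), which already produces $\zeta \in J(f)$ with $a_n \le |f^n(\zeta)| \le C a_n$; taking $a_n = R^n$ gives the result in one line.

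\textbf{The $(\Rightarrow)$ direction.} Your Picard--Montel plus distortion argument does not work for a general transcendental entire function. Corollary~\ref{c1} is specific to the exponential family, and there is no analogue giving bounded distortion of $f^{n-1}$ on a fixed disc $D$ around an arbitrary point $w \in J(f)$ for a general $f$: the orbit of a nearby $z_n$ can separate from that of $w$ arbitrarily fast, so there is no control on $|\zeta_n| = |f^{n-1}(z_n)|$. The paper instead argues the contrapositive. If (\ref{RSeq}) fails, one finds annuli $A(r_n, R_n)$ with $R_n/r_n \to \infty$ and $m(r,f) > 1$ throughout; it is then known (from the proof of \cite[Theorem~2]{MR2792984}) that on the inner halves of these annuli $m(r,f) > M(r,f)^\delta$ for some fixed $\delta > 0$. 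Any point $z \in L_U(f)$ has $|f^{n+1}(z)/f^n(z)|$ bounded, yet its orbit must enter these ever-wider annuli infinitely often, where $|f^{n+1}(z)/f^n(z)| \ge M(|f^n(z)|,f)^\delta / |f^n(z)| \to \infty$, a contradiction. Note this shows the stronger conclusion $L_U(f) = \emptyset$, not merely $L_U(f) \cap J(f) = \emptyset$.

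\textbf{The moreover clause.} Your plan to push the growth rate from the interior of a Fatou component to a boundary point via distortion is unjustified (the needed distortion bound up to the boundary is not available in general). The paper's route is immediate once you have the stronger contrapositive above: if $L_U(f) \ne \emptyset$ then (\ref{RSeq}) must hold, and then the $(\Leftarrow)$ direction gives $L_U(f) \cap J(f) \ne \emptyset$.
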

\begin{remark}
\normalfont The condition (\ref{RSeq}) was used in \cite{MR2792984}, also in relation to points tending to infinity at a specified rate. As observed in \cite{MR2792984}, this condition holds whenever $f$ is bounded on a path to infinity. It is well-known that this is the case for functions in the class $\mathcal{B}$ in particular.
\end{remark}
Second, we show that when $\Lunif(f)$ is not empty, it has a number of familiar properties which show that, in general, this is a dynamically interesting set. We say that a set $S$ is \emph{completely invariant} if $z \in S$ implies that $f(z) \in S$ and also that $f^{-1}(\{z\}) \subset S$.
\begin{theorem}
\label{Tprops}
Suppose that $f$ is a {\tef}, and that $\Lunif(f)~\ne~\emptyset$. Then the following all hold.
\begin{enumerate}[(i)]
\item $\Lunif(f)$ is completely invariant;
\item If $U$ is a Fatou component of $f$ and $U \cap \Lunif(f) \ne \emptyset$, then $U \subset \Lunif(f)$;
\item $\Lunif(f)$ is dense in $J(f)$ and $J(f) = \partial$$\Lunif(f)$.
\end{enumerate}
\end{theorem}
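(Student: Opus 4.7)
The plan is to treat the three parts in order: part (i) is essentially formal, part (iii) is a consequence of (i), (ii), Theorem~\ref{Texists} and standard density results, while part (ii) carries the main content. For (i), a direct re-indexing in the definition (\ref{LUdef}) suffices. If $C_1 R^n \leq |f^n(z)| \leq C_2 R^n$ for $n\geq N$, then $|f^n(f(z))|=|f^{n+1}(z)|$ lies in $[(RC_1)R^n,(RC_2)R^n]$; and for any preimage $w$ with $f(w)=z$, we have $|f^n(w)|=|f^{n-1}(z)|$ lying in $[(C_1/R)R^n,(C_2/R)R^n]$ for $n\geq N+1$. The same value of $R$ works in both cases, which yields complete invariance.

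The heart of the proof is (ii). Since $\Lunif(f) \subset I(f)$, whenever a Fatou component $U$ meets $\Lunif(f)$ the iterates $f^n$ tend to $\infty$ locally uniformly on $U$ (any subsequential limit of the normal family $\{f^n|_U\}$ must take the value $\infty$ somewhere, hence everywhere). The key technical step is a distortion estimate: for any compact connected $K\subset U$ there exist $B\geq 1$ and $n_0$ such that $B^{-1}|f^n(z_1)| \leq |f^n(z_2)| \leq B|f^n(z_1)|$ for all $z_1, z_2 \in K$ and $n\geq n_0$. Granting this, (ii) follows immediately: fix $z_0\in U\cap \Lunif(f)$ with parameters $(N,R,C_1,C_2)$; for any $w\in U$ choose a compact connected $K\subset U$ containing both $z_0$ and $w$, extract $B$, and transfer the bounds at $z_0$ to bounds at $w$ with constants $(B^{-1}C_1,BC_2)$ and the \emph{same} $R$, so $w \in \Lunif(f)$. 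The distortion estimate itself is obtained by applying Schwarz--Pick to the maps $f^n : U \to f^n(U)$ between Fatou components to bound the hyperbolic diameter of $f^n(K)$ inside $f^n(U)$, and then exploiting standard estimates on the hyperbolic metric of an escaping Fatou component at large modulus to convert the hyperbolic bound into the desired multiplicative Euclidean bound.

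For (iii), Theorem~\ref{Texists} provides a point $z_0 \in \Lunif(f)\cap J(f)$. Its full backward orbit $\bigcup_n f^{-n}(\{z_0\})$ is dense in $J(f)$ by the standard blow-up property and lies in $\Lunif(f)$ by part (i), so $\Lunif(f)$ is dense in $J(f)$. For the equality $J(f)=\partial \Lunif(f)$: part (ii) implies that each Fatou component is either contained in $\Lunif(f)$ or disjoint from it, so $\partial \Lunif(f)$ cannot meet $F(f)$ and hence $\partial \Lunif(f)\subset J(f)$. Conversely, every $z\in J(f)$ has arbitrarily close points of $\Lunif(f)$ (by the density just established) and arbitrarily close points of $A(f)\cap J(f)$ (by the analogous blow-up property for the fast escaping set, which is dense in $J(f)$); since the upper bound $C_2 R^n$ in the definition of $\Lunif(f)$ is incompatible with the definition (\ref{Adef}) of $A(f)$, we have $\Lunif(f)\cap A(f) = \emptyset$, forcing $z\in\partial \Lunif(f)$. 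The main obstacle will be the distortion estimate in (ii): a naive application of Harnack's inequality to the positive harmonic function $\log|f^n|$ on $U$ only bounds the ratio $\log|f^n(z_2)|/\log|f^n(z_1)|$, not $|f^n(z_2)|/|f^n(z_1)|$, so a more refined hyperbolic-metric argument is essential.
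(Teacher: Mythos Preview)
Your treatment of (i) matches the paper's, and your argument for (iii) is a correct and somewhat more direct alternative to the paper's route via Lemma~\ref{Jlemm}: instead of invoking that abstract criterion, you show both inclusions $\partial\Lunif(f)\subset J(f)$ and $J(f)\subset\partial\Lunif(f)$ by hand, using density of $A(f)\cap J(f)$ in $J(f)$ to supply points of the complement. The paper's version also records that $\Lunif(f)$ contains no periodic points (hence $\operatorname{int}\Lunif(f)\cap J(f)=\emptyset$), which feeds into Lemma~\ref{Jlemm}; your use of $A(f)$ serves the same purpose.

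There is, however, a genuine gap in your proof of (ii). The hyperbolic-metric argument you sketch for the distortion estimate requires the images $f^n(U)$ to be \emph{simply connected}. The conversion from ``bounded hyperbolic diameter in $f^n(U)$'' to ``bounded ratio $|f^n(z_2)|/|f^n(z_1)|$'' rests on a lower bound of the form $\rho_\Omega(z)\geq c/|z|$, which follows from Koebe's theorem when $\Omega$ is simply connected and $0\notin\Omega$, but fails badly for multiply connected domains: on an annulus $\{r<|z|<R\}$ of large modulus the hyperbolic density is of order $1/(|z|\log(R/r))$, so two points at bounded hyperbolic distance can have moduli differing by a power of $R/r$. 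Multiply connected Fatou components of a \tef\ do exist, and their images eventually contain exactly such large annuli, so the ``standard estimates'' you invoke are not available without first ruling this case out.

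The paper fills this gap in one line: any multiply connected Fatou component lies in $A(f)$ (citing \cite[Theorem~1.2]{Rippon01102012}), and since $\Lunif(f)\cap A(f)=\emptyset$ (an observation you yourself make in part (iii)), the hypothesis $U\cap\Lunif(f)\ne\emptyset$ forces $U$ to be simply connected. With that in hand, the distortion bound you want is precisely Lemma~\ref{dlemm} (stated in the paper for simply connected escaping Fatou components, from \cite{MR1216719}), and your transfer of the constants $C_1,C_2$ then proceeds exactly as the paper does.
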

\begin{remark}\normalfont
An example of a {\tef} with a Fatou component contained in $\Lunif(f)$ is the function $f(z) = 2 - \log 2 + 2z - e^z,$ given by Bergweiler \cite{MR1317494}. It is straightforward to deduce from the arguments in \cite{MR1317494} that $f$ has a Baker domain in $\Lunif(f)$ and also wandering Fatou components in $\Lunif(f)$. We refer to \cite{MR1216719} for definitions.
\end{remark}

The proof of Theorem~\ref{Texists} requires the following \cite[Theorem 2]{MR2792984}.
\begin{lemma}
\label{LRS}
Suppose that $f$ is a {\tef}. Then $f$ has the property that, for all positive sequences $(a_n)_{n\isnatural}$ such that $a_n\rightarrow\infty$ as $n\rightarrow\infty$ and $a_{n+1} = O(M(a_n, f))$ as $n\rightarrow\infty$, there exist $\zeta\in J(f)$ and $C > 1$ such that $$a_n \leq |f^n(\zeta)| \leq Ca_n, \qfor n\isnatural,$$ if and only if there exist positive constants $c$ and $r_0$, and $d > 1$ such that (\ref{RSeq}) holds.
\end{lemma}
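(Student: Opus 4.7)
The plan is to combine Lemma~\ref{LRS} in both directions with structural consequences of the existence of a uniformly slowly escaping orbit.

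For the easy direction $(\ref{RSeq})\Rightarrow \Lunif(f)\cap J(f)\ne\emptyset$, I apply Lemma~\ref{LRS} with the geometric sequence $a_n=R^n$ for some fixed $R>1$. The growth hypothesis $a_{n+1}=O(M(a_n,f))$ is immediate for a transcendental entire function because $M(r,f)/r\to\infty$ as $r\to\infty$, and in fact $a_{n+1}/M(a_n,f)\to 0$. Lemma~\ref{LRS} then yields $\zeta\in J(f)$ and $C>1$ with $R^n\leq|f^n(\zeta)|\leq CR^n$ for all $n\isnatural$, placing $\zeta$ in $\Lunif(f)\cap J(f)$.

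For the converse of the biconditional and the ``moreover'' simultaneously, I intend to prove the stronger implication $\Lunif(f)\ne\emptyset\Rightarrow(\ref{RSeq})$. The converse of the biconditional then follows from $\Lunif(f)\cap J(f)\subseteq\Lunif(f)$, and the ``moreover'' follows by feeding $(\ref{RSeq})$ back into the easy direction. Given $z_0\in\Lunif(f)$ with $C_1R^n\leq|f^n(z_0)|\leq C_2R^n$ for $n\geq N$, set $w_n=f^n(z_0)$; since $w_n$ lies on the circle $|z|=|w_n|$, we have $m(|w_n|,f)\leq|f(w_n)|=|w_{n+1}|\leq L|w_n|$, where $L=C_2R/C_1$, and the radii $|w_n|$ have consecutive ratios bounded by $L$, so they meet every annulus $(r,Lr)$ for $r$ sufficiently large. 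Only the \emph{linear} bound $m(\rho,f)\leq L\rho$ is produced this way on a logarithmically thick sequence of radii. To upgrade this to the \emph{uniform} bound demanded by~$(\ref{RSeq})$, I exploit the preimage structure: for any non-exceptional $a\in\mathbb{C}$ each $z'\in f^{-1}(a)$ satisfies $m(|z'|,f)\leq|a|$, a constant. A value-distribution argument (Picard/Nevanlinna, which for the exponential family reduces to the explicit description of $\thefun^{-1}(a)$ as a vertical arithmetic progression) then shows that the radii $\{|z'|:z'\in f^{-1}(a)\}$ meet every annulus $(r,dr)$ for some $d>1$ and all large $r$, producing~$(\ref{RSeq})$ with $c=|a|$.

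The principal obstacle is this upgrade step: passing from a linear bound to a uniform bound on the minimum modulus. As a contingency, one may instead argue by contrapositive via Lemma~\ref{LRS}. If $(\ref{RSeq})$ fails, the backward direction of Lemma~\ref{LRS} produces some admissible sequence $(a_n)$ with no realising point in $J(f)$; a shadowing argument using density of backward orbits of any $\zeta\in \Lunif(f)\cap J(f)$ and the uniform bounds $C_1R^n\leq|f^n(\zeta)|\leq C_2R^n$ would then show that realisability of the one sequence $R^n$ forces realisability of every admissible sequence, delivering the desired contradiction. Either route must be made quantitative enough to convert the linear-in-$\rho$ data available on a single orbit into the constant upper bound in $(\ref{RSeq})$.
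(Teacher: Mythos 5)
Your proposal does not actually prove the statement in question. The statement is Lemma~\ref{LRS}, which is Rippon and Stallard's characterisation result, quoted in the paper from \cite[Theorem 2]{MR2792984} without proof; your argument \emph{invokes} Lemma~\ref{LRS} in both directions and uses it to derive Theorem~\ref{Texists}. Read as a proof of Lemma~\ref{LRS}, the proposal is therefore circular. Read instead as a proof of Theorem~\ref{Texists}, your forward direction (apply the lemma to $a_n=R^n$, noting $R^{n+1}\leq M(R^n,f)$ for large $n$) coincides with the paper's, as does your logic for the ``moreover'' clause ($\Lunif(f)\ne\emptyset$ forces (\ref{RSeq}), which feeds back into the forward direction).

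The converse direction has a genuine gap exactly where you flag it, and neither of your two routes closes it. The ``upgrade'' via preimages is circular: for any $a$, the radii carrying a point of $f^{-1}(a)$ are contained in $\{r: m(r,f)\leq |a|\}$, so the assertion that these radii meet every annulus $(r,dr)$ for large $r$ \emph{is} condition (\ref{RSeq}) with $c=|a|$ — it is the conclusion, not a tool. It also cannot be true unconditionally for a general {\tef}: there are entire functions for which (\ref{RSeq}) fails (otherwise Theorem~\ref{Texists} would be vacuous), and for such functions the $a$-points omit arbitrarily thick annuli, namely those on which $m(r,f)>|a|$. The contingency ``shadowing'' argument is likewise not viable as stated: realisability of the single sequence $R^n$ does not force realisability of every admissible sequence. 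The paper's actual argument is different and is the missing idea: if (\ref{RSeq}) fails, then by an estimate from the proof of \cite[Theorem 2]{MR2792984} there exist $\delta\in(0,1)$ and annuli $A(r_n,R_n)$ with $R_n/r_n\to\infty$ on which $m(r,f)>M(r,f)^{\delta}$; the bounded consecutive-ratio property of a $\Lunif(f)$ orbit forces the orbit into $A(2r_n,\tfrac{1}{2}R_n)$ infinitely often, where the modulus must then grow by a factor of at least $M(r,f)^{\delta}/r\to\infty$, contradicting that same bounded ratio. Your linear bound $m(|w_n|,f)\leq L|w_n|$ along the orbit plays no role; the essential input is the minimum-modulus \emph{lower} bound available on thick annuli when (\ref{RSeq}) fails.
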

\begin{proof}[Proof of Theorem~\ref{Texists}]
Suppose that $f$ is a {\tef}. If there exist positive constants $c$ and $r_0$, and $d > 1$ such that (\ref{RSeq}) holds, then it follows immediately from Lemma~\ref{LRS} that $\Lunif(f) \cap J(f) \ne \emptyset$.

The other direction proceeds very similarly to the proof of the corresponding direction in \cite[Theorem 2]{MR2792984}. We give some details for completeness. Suppose that there do not exist positive constants $c$ and $r_0$, and $d > 1$ such that (\ref{RSeq}) holds. Then there exists a sequence of annuli $A(r_n,R_n)$, where $0 < r_n < R_n$, such that $r_n\rightarrow\infty$ as $n\rightarrow\infty$, $R_n/r_n\rightarrow\infty$ as $n\rightarrow\infty$ and $$m(r, f) > 1, \qfor r_n < r < R_n, \ n \isnatural.$$ As shown in the proof of \cite[Theorem 2]{MR2792984}, it follows that there exists $\delta \in (0,1)$ and $N\isnatural$ such that
\begin{equation}
\label{meq}
m(r, f)>M(r, f)^\delta, \qfor 2r_n < r < \frac{1}{2}R_n, \ n \geq N.
\end{equation}

We shall deduce that $\Lunif(f) = \emptyset$. Suppose, by way of contradiction, that there exists $z\in\Lunif(f)$. It follows from (\ref{LUdef}) that there exists $C>0$ and $N\isnatural$ such that
\begin{equation}
\label{zinLU}
f^n(z)\ne 0 \text{ and } \left|\frac{f^{n+1}(z)}{f^n(z)}\right| \leq C, \qfor n\geq N.
\end{equation}

We deduce that, for infinitely many values of $n\isnatural$, there exists $p(n)\isnatural$ such that $f^n(z) \in A(2r_{p(n)}, \frac{1}{2}R_{p(n)})$. It follows by (\ref{meq}) that $$\left|\frac{f^{n+1}(z)}{f^n(z)}\right| > \frac{M(|f^n(z)|,f)^\delta}{|f^n(z)|}, \qfor \text{infinitely many values of } n\isnatural.$$ Since, as is well-known, $$\frac{M(r, f)^\delta}{r}\rightarrow\infty \text{ as } r\rightarrow\infty,$$ this is a contradiction to (\ref{zinLU}). We deduce that $\Lunif(f) \cap J(f) = \emptyset$ if and only if (\ref{RSeq}) holds. Finally, if $\Lunif(f) \ne \emptyset$, then (\ref{RSeq}) holds, and so $\Lunif(f) \cap J(f) \ne \emptyset$.  This completes the proof of Theorem~\ref{Texists}.
\end{proof}
In order to prove Theorem~\ref{Tprops}, we require the following well-known distortion lemma; see, for example, \cite[Lemma 7]{MR1216719}.
\begin{lemma}
\label{dlemm}
Suppose that $f$ is a {\tef}, and that $U \subset I(f)$ is a simply connected Fatou component of $f$. Suppose that $K$ is a compact subset of $U$. Then there exist $C > 1$ and $N_0\isnatural$
such that
\begin{equation*}
\frac{1}{C}|f^n(z)| \leq|f^n(w)| \leq C|f^n(z)|, \qfor w, z \in K, n \geq N_0.
\end{equation*}
\end{lemma}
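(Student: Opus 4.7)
The plan is to use the Riemann mapping theorem for $U$ together with Koebe's distortion and one-quarter theorems.

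Since $U$ is a Fatou component contained in $I(f)$, the family $\{f^n\}$ is normal on $U$ and $f^n \to \infty$ pointwise, hence locally uniformly on $U$. In particular, $f^n$ is nonvanishing on $K$ for all sufficiently large $n$. Because $U$ is simply connected with $U \neq \mathbb{C}$ (as $F(f) \neq \mathbb{C}$), for any basepoint $z_0 \in K$ there is a Riemann map $\phi: B(0,1) \to U$ with $\phi(0) = z_0$. Since $\phi$ is a hyperbolic isometry and $K$ has finite hyperbolic diameter $D$ in $U$, the preimage $\phi^{-1}(K)$ sits inside $\overline{B(0,r)}$ with $r = \tanh(D/2) < 1$, uniformly in the choice of $z_0 \in K$.

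Let $V_n$ denote the Fatou component containing $f^n(U)$, and assume for the moment that $V_n$ is simply connected; let $\psi_n: B(0,1) \to V_n$ be the Riemann map with $\psi_n(0) = f^n(z_0)$. The composition $\Phi_n := \psi_n^{-1} \circ f^n \circ \phi$ is a holomorphic self-map of $B(0,1)$ fixing $0$, so by the Schwarz lemma $|\Phi_n(\zeta)| \leq |\zeta| \leq r$ for $\zeta \in \phi^{-1}(K)$; hence $f^n(K) \subset \psi_n(\overline{B(0,r)})$. Koebe's distortion theorem applied to the univalent map $\psi_n$ gives, for every $w \in K$,
$$|f^n(w) - f^n(z_0)| \leq \frac{r}{(1-r)^2}|\psi_n'(0)|,$$
while Koebe's one-quarter theorem yields $|\psi_n'(0)| \leq 4\operatorname{dist}(f^n(z_0), \partial V_n)$. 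The key geometric observation is that, fixing once and for all any $b \in J(f)$, one has $b \notin V_n$ for every $n$ (since $V_n \cap J(f) = \emptyset$), and consequently $\operatorname{dist}(f^n(z_0), \partial V_n) \leq |f^n(z_0) - b| \leq 2|f^n(z_0)|$ whenever $|f^n(z_0)| \geq |b|$, which holds for all sufficiently large $n$. Combining these three estimates gives $|f^n(w) - f^n(z_0)| \leq C_1 |f^n(z_0)|$ with a constant $C_1$ depending only on $K$.

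Because $r$, and therefore $C_1$, does not depend on which point of $K$ is chosen as basepoint, applying the same argument with the roles of $z_0$ and $w$ interchanged yields $|f^n(z_0) - f^n(w)| \leq C_1 |f^n(w)|$. Together these give $(1+C_1)^{-1}|f^n(z_0)| \leq |f^n(w)| \leq (1+C_1)|f^n(z_0)|$, which is the conclusion of the lemma with $C = 1 + C_1$.

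The main obstacle I foresee is the case when $V_n$ is multiply connected, which can arise for certain wandering Fatou components of transcendental entire functions even when $U$ itself is simply connected. In this case $\psi_n$ must be replaced by the universal covering $\pi_n: B(0,1) \to V_n$, through which $f^n|_U$ lifts by simple connectivity of $U$; the Schwarz-lemma step then constrains the lift into $\overline{B(0,r)}$ as before, but converting this into a linear Euclidean bound on $f^n(K)$ requires a more delicate hyperbolic-metric comparison on $V_n$, again exploiting that the fixed point $b \in J(f)$ lies outside every $V_n$.
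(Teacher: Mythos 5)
The paper does not actually prove this lemma --- it cites \cite[Lemma 7]{MR1216719} --- so your argument should be measured against the standard proof behind that citation, and it matches it in substance: the combination of the Schwarz lemma (equivalently, Schwarz--Pick contraction of the hyperbolic metric under $f^n$) with the Koebe growth and one-quarter theorems is exactly the usual route, and the Koebe step is the same as the density estimate $\rho_V(\zeta)\geq 1/(2\operatorname{dist}(\zeta,\partial V))$ for simply connected $V$. The individual steps you carry out are correct: the uniform radius $r$ from the hyperbolic diameter of $K$, the bound $|\psi_n'(0)|\leq 4\operatorname{dist}(f^n(z_0),\partial V_n)\leq 4|f^n(z_0)-b|$ with $b\in J(f)\subset \mathbb{C}\setminus V_n$, and the symmetrisation over basepoints (with $N_0$ uniform because $f^n\to\infty$ uniformly on $K$).

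The one genuine loose end is the one you flag yourself: you need every $V_n$ to be simply connected, and you leave that case open. Two points here. First, the fallback you sketch cannot work: on a multiply connected $V_n$ the hyperbolic density comparison only yields a bound of the form $|f^n(w)|\leq C|f^n(z)|^{C}$, and the linear bound of the lemma genuinely fails on multiply connected wandering domains (their images of compact sets spread over annuli of unbounded modulus), so the case must be excluded rather than handled. Second, it can be excluded: if some $V_{n+1}$ were multiply connected then, by Baker's theorem, every Fatou component of $f$ would be bounded, so $f\colon V_n\to V_{n+1}$ would be a proper map of finite degree; Riemann--Hurwitz (or an argument-principle/winding-number argument applied to a curve in $V_{n+1}$ surrounding a point of $J(f)$) then forces $V_n$ to be multiply connected, and descending to $V_0=U$ contradicts the hypothesis that $U$ is simply connected. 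Alternatively, for the only application in this paper (Theorem~\ref{Tprops}(ii)) the issue is moot: there $U$ meets $\Lunif(f)$, hence $U\not\subset A(f)$, while every multiply connected Fatou component lies in the completely invariant set $A(f)$, so no $V_n$ can be multiply connected. With either observation added, your proof is complete.
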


We also use the following, which is a special case of \cite[Lemma 10]{MR2792984}. We say that a set $S$ is \emph{backwards invariant} if $z \in S$ implies that $f^{-1}(\{z\}) \subset S$.
\begin{lemma}
\label{Jlemm}
Suppose that $f$ is a {\tef}, and that $E~\subset~\mathbb{C}$ contains at least three points. Suppose also that $E$ is backwards invariant under $f$, that $\operatorname{int} E\cap J(f) = \emptyset$, and that every component of $F(f)$ that meets $E$ is contained in $E$. Then $\partial E = J(f)$.
\end{lemma}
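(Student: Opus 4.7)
The plan is to prove the two inclusions $J(f) \subset \partial E$ and $\partial E \subset J(f)$ separately, with the first relying on the density of backward orbits and the second on the Fatou-component hypothesis.

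For the inclusion $J(f) \subset \partial E$, I would first show that $J(f) \subset \overline{E}$. Since $f$ is a transcendental entire function, it has at most one (finite) exceptional value by the little Picard theorem; and as $E$ contains at least three points, at least two points $w \in E$ are non-exceptional. For any such $w$, the classical characterisation of the Julia set (a consequence of Montel's theorem) yields
\begin{equation*}
J(f) \subset \overline{\bigcup_{n \geq 0} f^{-n}(\{w\})}.
\end{equation*}
Because $E$ is backwards invariant and contains $w$, the whole backward orbit of $w$ lies in $E$, so $J(f) \subset \overline{E}$. Combining this with the hypothesis $\operatorname{int} E \cap J(f) = \emptyset$ gives $J(f) \subset \overline{E} \setminus \operatorname{int} E = \partial E$.

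For the reverse inclusion, suppose for a contradiction that some $z \in \partial E$ lies in $F(f)$, and let $U$ be the Fatou component containing $z$. Two cases arise. If $U \cap E \ne \emptyset$, then by hypothesis $U \subset E$; since $U$ is open, this forces $U \subset \operatorname{int} E$, and in particular $z \in \operatorname{int} E$, contradicting $z \in \partial E$. If on the other hand $U \cap E = \emptyset$, then $U$ is an open neighbourhood of $z$ disjoint from $E$, which contradicts $z \in \partial E \subset \overline{E}$. Either way we reach a contradiction, so $\partial E \subset J(f)$.

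The main obstacle is the first inclusion, and specifically the appeal to the density of backward orbits, which is why the hypothesis that $E$ contains at least three points is needed (it guarantees a non-exceptional point in $E$). Once $J(f) \subset \overline{E}$ is established, both inclusions follow cleanly from the interior/Fatou-component hypotheses, and no further machinery is required.
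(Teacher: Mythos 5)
Your proof is correct. The paper itself gives no proof of this lemma, simply citing it as a special case of \cite[Lemma~10]{MR2792984}; your argument --- using the blowing-up property of $J(f)$ (density of the backward orbit of a non-exceptional point of $E$) together with backward invariance to get $J(f)\subset\overline{E}$, the hypothesis $\operatorname{int}E\cap J(f)=\emptyset$ to conclude $J(f)\subset\partial E$, and the Fatou-component hypothesis for the reverse inclusion --- is precisely the standard proof of that cited result, and every step checks out.
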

\begin{proof}[Proof of Theorem~\ref{Tprops}]
Suppose that $f$ is a {\tef}, and that $\Lunif(f)\ne\emptyset$, in which case, by Theorem~\ref{Texists}, $\Lunif(f)\cap J(f)\ne\emptyset$. First we observe that part (i) of the theorem follows immediately from the definition of $\Lunif(f)$.\\

For part (ii) of the theorem, suppose that $U$ is a Fatou component of $f$, such that $U \cap \Lunif(f) \ne \emptyset$. It follows by normality that $U \subset I(f)$.

Suppose that $U$ is multiply connected in which case \cite[Theorem 1.2]{Rippon01102012} we have that $U\subset A(f)$. However, it is known \cite{MR1684251} that if $z \in A(f)$ then $$\frac{\log \log |f^n(z)|}{n}\rightarrow\infty \text{ as } n\rightarrow\infty,$$ in which case $z \notin \Lunif(f)$. We deduce that $U$ is simply connected.

Suppose that $z \in U \cap \Lunif(f)$. Then there exist $N\isnatural, \ R > 1$ and $0 < C_1 < C_2$ such that $C_1R^{n} \leq |f^{n}(z)| \leq C_2R^{n}, \text{ for } n\geq N.$ Suppose that $K$ is a compact subset of $U$ containing $z$. Then, by Lemma~\ref{dlemm} there exist $C > 1$ and $N_0\isnatural$ such that $$\frac{C_1}{C} R^{n} \leq |f^{n}(w)| \leq C_2CR^{n}, \text{ for } w \in K, \ n\geq \max\{N, N_0\}.$$ Hence $K \subset \Lunif(f)$, and so $U \subset \Lunif(f)$. This completes the proof of part (ii).\\

For part (iii) of the theorem, we note that $\Lunif(f)\cap J(f)$ is an infinite set, since for each $z \in \Lunif(f)\cap J(f)$ at least one of the points $z, f(z)$ or $f^2(z)$ must have an infinite backwards orbit. It is known \cite[Theorem 4]{MR1216719} that the set of repelling periodic points of $f$ is dense in $J(f)$. Since, by definition, $\Lunif(f)$ contains no periodic points, $\operatorname{int} \Lunif(f) \subset F(f)$. The result follows by part (i) and part (ii), and by Lemma~\ref{Jlemm} applied with $E=\Lunif(f) \cap J(f)$ and then with $E=\Lunif(f)$. This completes the proof of Theorem~\ref{Tprops}.
\end{proof}
%
%
%
\section{Results of Karpi{\'n}ska and Urba{\'n}ski}
\label{SKU}
As mentioned in the introduction, Karpi{\'n}ska and Urba{\'n}ski \cite{MR2197375} studied the size of various subsets of $I(\thefun)$. For integers $k\geq 0$ and $l\geq k$, and $\epsilon > 0$, they defined sets $$D_\epsilon^{k,l} = \left\{ z \in I(\thefun): \Real(\thefun^n(z)) > q, \text{ for } n \geq k \text{, and }  |\operatorname{Im}(\thefun^n(z))| \leq \frac{|\thefun^n(z)|}{(\log |\thefun^n(z)|)^\epsilon}, \text{ for } n\geq l\right\},$$ where $q$ is fixed and large. Their main result is the following.
\begin{theorem}
For every $\epsilon > 0$ and all integers $0 \leq k \leq l,$ $$\dim_H D_\epsilon^{k,l} = 1 + \frac{1}{1 + \epsilon}.$$
\end{theorem}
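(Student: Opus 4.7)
My plan is to establish matching upper and lower bounds on $\dim_H D_\epsilon^{k,l}$, extending the methods of Sections~\ref{Slower} and~\ref{Supper} to the horn-shaped region imposed by the angular constraint $|\Imag(w)| \leq |w|/(\log|w|)^\epsilon$.

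For the upper bound $\dim_H D_\epsilon^{k,l} \leq 1 + \frac{1}{1+\epsilon}$, I would adapt the covering argument of Theorem~\ref{ubtheo}. The constraint confines each iterate $\thefun^n(z)$ to a polar horn of angular width $\phi_n \approx (\log|\thefun^n(z)|)^{-\epsilon}$. I would refine the family $W_{n,m}$ used in that proof by intersecting each $W_{n,m}$ with the horn, so that the number of preimages of a level-$(n{+}1)$ target inside a level-$n$ set is reduced by a factor proportional to $\phi_n$. Combined with the distortion bound from Corollary~\ref{c1}, summing $\sum(\operatorname{diam} F)^s$ over the resulting cover should converge precisely when $s > 1 + \frac{1}{1+\epsilon}$.

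For the lower bound $\dim_H D_\epsilon^{k,l} \geq 1 + \frac{1}{1+\epsilon}$, I would mimic the McMullen-style Cantor construction used to prove Theorem~\ref{lbtheo}. Since $\Real(z) \approx |z|$ inside the horn, the iteration satisfies $|\thefun(z)| = |\lambda|e^{\Real(z)} \approx |\lambda|e^{|z|}$, so the natural scales are iterated exponentials $R_{n+1} \approx |\lambda|e^{R_n}$. I would take targets $T_n$ to be polar rectangles $\{w : R_n \leq |w| \leq \alpha_n R_n, \ |\arg w| \leq (\log R_n)^{-\epsilon}\}$ lying in the horn, define $\mathcal{E}_n$ as the families of preimage components of $T_n$ inside their parents, and apply Lemma~\ref{mcmlemma}. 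The parameters $\alpha_n$ and the angular widths should be tuned to yield $\limsup_n \sum_{m=0}^{n} |\log \Delta_m|/|\log d_n| = \epsilon/(1+\epsilon)$, from which (\ref{mcmulleneq}) gives the claimed lower bound.

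The main obstacle is the precise tuning of parameters in the lower bound, so that the McMullen ratio in (\ref{mcmulleneq}) equals exactly $\epsilon/(1+\epsilon)$. A naive choice of constant-aspect-ratio targets produces a Cantor set that is far too sparse (yielding a trivial or even negative lower bound), whereas overly generous choices violate the horn constraint at the next iterate. I expect the correct construction to reflect a product-like structure of $D_\epsilon^{k,l}$ combining a one-dimensional radial Cantor set with a $1/(1+\epsilon)$-dimensional angular factor coming from the $(\log|w|)^{-\epsilon}$ squeezing; getting the two rates to match up in McMullen's formula, and simultaneously in the covering sum on the upper-bound side, will be the delicate part of the argument.
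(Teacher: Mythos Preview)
The paper does not prove this theorem. It is quoted in Section~\ref{SKU} as the main result of Karpi\'nska and Urba\'nski \cite{MR2197375}, and no proof is given here; the only argument the present paper supplies in that section is the short observation that each set $D_\epsilon^{k,l}$ lies in the fast escaping set $A(\thefun)$, via (\ref{ifinV}) and \cite[Theorem~2.9]{Rippon01102012}. So there is no ``paper's own proof'' to compare against.

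As for your outline itself: the overall strategy (a McMullen-type nested construction for the lower bound and a covering/distortion argument for the upper bound) is indeed the standard template, and is essentially how the result is proved in \cite{MR2197375}. But note that Theorems~\ref{lbtheo} and~\ref{ubtheo} of the present paper are built around annuli of constant modulus $R_n = R^{n+1}$ and slowly-growing itineraries; since, as the paper shows, $D_\epsilon^{k,l}\subset A(\thefun)$, the relevant scales are iterated exponentials and the counting and density estimates are governed by an entirely different balance. In particular, the hypothesis (\ref{gheq}) of Theorem~\ref{ubtheo} fails outright for sequences with $h_{p,n+1}\approx e^{g_{p,n}}$, so that theorem cannot be invoked or ``refined'' as you propose; the covering argument has to be set up from scratch on horn-shaped pieces at exponential scales. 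Your proposal is therefore a reasonable sketch of a proof \emph{of the Karpi\'nska--Urba\'nski theorem}, but it is not an adaptation of the machinery in this paper, and in any case the paper itself makes no attempt at such a proof.
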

In this section we show that the sets $D_\epsilon^{k,l}$ lie in the \emph{fast} escaping set of $\thefun$.
First, we define the domain $V = \{ z : \operatorname{Re}(z) > \frac{1}{2}|z|\}$. Note that
\begin{equation}
\label{ifinV}
|\thefun(z)| = |\lambda|e^{\operatorname{Re}(z)} > |\lambda|e^{\frac{1}{2}|z|} = M\left(\frac{1}{2}|z|, \thefun\right), \qfor z \in V.
\end{equation}
Suppose that $\epsilon > 0$ and $0 \leq k \leq l$, and that $z \in D_\epsilon^{k,l}$. It follows from the definition of $D_\epsilon^{k,l}$ that there exists $N\isnatural$ such that $\thefun^{n+N}(z) \in V$, for $n\geq 0$. Set $R = \frac{1}{2}|\thefun^N(z)|$, and define $$\mu(r) = \frac{1}{2}M(r, \thefun), \qfor r \geq 0.$$ We may assume that $R$ is sufficiently large that $\mu(r) > r$, for $r \geq R$. It follows from (\ref{ifinV}) that $$|\thefun^{n}(\thefun^N(z))| > \mu^n(R), \qfor n\geq 0,$$
and so, by \cite[Theorem 2.9]{Rippon01102012}, that $z \in A(\thefun)$, as required.\\

%
%
%
%
%
%
%
\emph{Acknowledgment:}
The author is grateful to Phil Rippon and Gwyneth Stallard for all their help with this paper. The author is also grateful to Chris Bishop for useful discussions.
\bibliographystyle{acm}
\bibliography{../../Research.References}
\end{document}